\definecolor{webgreen}{rgb}{0,.5,0}
\definecolor{webbrown}{rgb}{.6,0,0}
\newcommand{\seqnum}[1]{\href{http://oeis.org/#1}{\underline{#1}}}
\theoremstyle{plain}
\newtheorem{theorem}{Theorem}
\newtheorem{corollary}[theorem]{Corollary}
\newtheorem{lemma}[theorem]{Lemma}
\theoremstyle{definition}
\newtheorem{conjecture}[theorem]{Conjecture}
\theoremstyle{remark}
\newtheorem{remark}[theorem]{Remark}
\def\modd#1 #2{#1\ \mbox{\rm (mod}\ #2\mbox{\rm )}}
\def\Enn{{\mathbb{N}}}
\def\codelink{\url{https://cs.uwaterloo.ca/~shallit/papers.html}}
\title{Lagrange's Theorem for Binary Squares}
\author{P.~Madhusudan\thanks{This material is based upon work supported by the National Science Foundation under Grant No.~1527395}\\
Department of Computer Science\\
Thomas M. Siebel Center for Computer Science\\
201 North Goodwin Avenue\\
Urbana, IL 61801-2302 \\
USA \\ {\tt madhu@illinois.edu} \\
\and
Dirk Nowotka \\ Department of Computer Science \\
Kiel University \\
D-24098 Kiel \\
Germany \\
{\tt dn@informatik.uni-kiel.de} \\
\and
Aayush Rajasekaran and Jeffrey Shallit \\
School of Computer Science \\
University of Waterloo \\
Waterloo, ON  N2L 3G1 \\
Canada \\
{\tt arajasekaran@uwaterloo.ca, shallit@uwaterloo.ca}
}
\begin{document}

\theoremstyle{plain}

\theoremstyle{definition}
\newtheorem{openproblem}[theorem]{Open Problem}

\maketitle

\begin{abstract}
We show how to prove theorems in additive number theory using a decision procedure based on finite automata.  Among other things, we obtain the following
analogue of Lagrange's theorem:  
every natural number $> 686$ is the sum of at most $4$ natural numbers
whose canonical base-$2$ representation is a {\it binary square}, that is, a string
of the form $xx$ for some block of bits $x$.   Here
the number $4$ is optimal.
While we cannot embed this theorem itself in a decidable
theory, we show that \emph{stronger} lemmas that imply
the theorem can be embedded in decidable theories, and show how automated methods can be used to search for these stronger lemmas.
 \end{abstract}

\section{Introduction}
\label{intro}
Additive number theory is the study of the additive properties of
integers \cite{Nathanson:1996}.  In particular, an {\it additive basis of order $h$} is 
a subset $S \subseteq \Enn$ such that every natural number is the
sum of $h$ members, not necessarily distinct, of $S$.
The principal problem of additive number theory is to determine
whether a given subset $S$ is an additive basis of order $h$ for some $h$,
and if so, to determine the smallest value of $h$.   There has been much research in the area, and deep techniques, such as the Hardy-Littlewood circle method, have been developed to solve these kinds of problems \cite{Vaughan:1997}.

One of the earliest results in additive number
theory is Lagrange's famous
theorem \cite{Lagrange:1770} that every natural number is the sum of four squares \cite{Grosswald:1985,Moreno&Wagstaff:2005}.  In the 
terminology of the previous paragraph, this means that
$S = \{ 0^2, 1^2, 2^2, 3^2,\ldots \}$ forms an additive basis
of order $4$.  
The celebrated problem of Waring (1770)
(see, e.g., \cite{Ellison:1971,Small:1977,Vaughan&Wooley:2002}) is to determine
the corresponding least order $g(k)$ for $k$'th powers. 
Since it is easy to see that
numbers of the form $4^a(8k+7)$ cannot be expressed as the sum of
three squares, it follows that $g(2) = 4$.   It is known that
$g(3) = 9$ and $g(4) = 19$.

In a variation on this concept we say that $S \subseteq \Enn$ is an
{\it asymptotic additive basis of order $h$} if every {\it sufficiently
large\/} natural number is the sum of $h$ members, not necessarily
distinct, of $S$.  The classical function $G(k)$ is defined to be the least
asymptotic basis order for $k$'th powers.  From above we have $G(2) = 4$.
It is known that $G(14) = 16$, and $4 \leq G(3) \leq 7$. 
Despite much work, the exact value of $G(3)$ is currently
unknown.

Recently there has been interest in doing additive number theory on sets of natural numbers whose
base-$k$ representations have certain properties.  For example, 
Banks \cite{Banks:2016} proved that every natural number is the sum of at most $49$ natural numbers whose decimal representation is a palindrome.  This was improved by
Cilleruelo, Luca, and Baxter \cite{Cilleruelo&Luca&Baxter:2017} to $3$ summands for every base $b \geq 5$.
The remaining cases $b = 2,3,4$ were recently resolved \cite{Rajasekaran&Shallit&Smith:2018}.

In this paper we consider a variation on Lagrange's theorem.  Instead
of the ordinary notion of the square of an integer, we consider
``squares'' in the sense of formal language theory \cite{Hopcroft&Ullman:1979}.   That is,
we consider $x$, the canonical binary (base-$2$) representation of an integer
$N$, and call $N$ a {\it binary square} if $N = 0$, or if
$x = yy$ for some nonempty string $y$ that starts with a $1$.
Thus, for example, $N = 221$ is a binary square, since
$221$ in base $2$ is $11011101 = (1101)(1101)$.  The first
few binary squares are
$$ 0,3,10,15,36,45,54,63,136,153,170,187,204,221,238,255, \ldots;$$
they form sequence \seqnum{A020330} in the {\it On-Line Encyclopedia
of Integer Sequences} (OEIS) \cite{oeis}.   Clearly a number
$N > 0$ is a binary square if and only if it is of the
form $a (2^n + 1)$ for $n \geq 1$ and
$2^{n-1} \leq a < 2^n$.  
This is a very natural sequence to
study, since
the binary squares have density $\Theta(N^{1/2})$
in the natural numbers, just like the ordinary squares.  
(There exist sets of density $\Theta(N^{1/2})$ that do not form an asymptotic basis of finite order, so density considerations alone do {\it not\/} imply our result.)

In this paper we prove the following result.
\begin{theorem}
The binary squares form an asymptotic basis of order $4$.
More precisely,
every natural number $N>686$ is the sum of $4$
binary squares.  There are $56$ exceptions, given below:
\begin{multline*} 1,2,4,5,7,8,11,14,17,22,27,29,32,34,37,41,44,47,53,62,95,104,107,113,116,122,125,\\ 131,134,140,143,148,155,158,160,167,407,424,441,458,475,492,509,526,552,560, \\
   \ \ \ \ \     569,587,599,608,613,620,638,653,671,686. \hfill\qed
\end{multline*}
\label{main}
\end{theorem}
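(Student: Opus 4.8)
The plan is to reduce the infinite statement ``every $N > 686$ is a sum of four binary squares'' to a statement about a finite automaton that we can build and verify mechanically. The key observation is that a binary square is a number of the form $a(2^n+1)$ with $2^{n-1} \le a < 2^n$, so its base-$2$ representation is $xx$ where $x$ has length $n$ and begins with $1$. To encode ``$N$ is the sum of four binary squares,'' I would work with the base-$2$ representations of the four summands $N_1, N_2, N_3, N_4$ in parallel, reading them synchronously (most-significant-digit first, say), together with $N$. The difficulty is that the four halves $x_i x_i$ need not have the same length, so a naive product automaton does not immediately see the ``square'' structure; I would handle this by guessing the lengths $n_1, n_2, n_3, n_4$ and the ``fold points,'' or equivalently by reading each $N_i$ as two copies of $x_i$ and checking consistency of the two copies while simultaneously performing the addition. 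Carrying four additions in parallel needs only a bounded carry, so this is a finite-state computation.

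Concretely, I would first (Step 1) prove the clean characterization: $N > 0$ is a binary square iff $N = a(2^n + 1)$ for some $n \ge 1$ and $2^{n-1} \le a < 2^n$ — this is already asserted in the excerpt and is an easy exercise from the definition. Then (Step 2) I would construct a nondeterministic finite automaton $\mathcal{A}$ over an appropriate alphabet that, on input the base-$2$ representation of $N$, accepts iff $N$ is expressible as $N_1 + N_2 + N_3 + N_4$ with each $N_i$ a binary square; the automaton nondeterministically guesses the summands and their internal repetition structure and verifies everything with bounded memory. The correctness of this construction is the conceptual heart of the argument. Then (Step 3), since the language $L = \{\,N : N$ is NOT a sum of four binary squares$\}$ is the complement of a regular language, $L$ is regular; I would compute a DFA for $L$, and then (Step 4) decide finiteness of $L$ and enumerate its elements — a regular language is finite iff its minimal DFA has no cycle reachable from the start state and co-reachable to a final state, and in that case the finitely many accepted strings can be listed. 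The output should be exactly the $56$ exceptional values together with the bound $686$.

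The main obstacle, and the step I expect to require the most care, is Step 2: designing the automaton so that it genuinely captures the binary-square condition for summands of \emph{differing} lengths while keeping the state space finite. The subtlety is that when we add $N_1 + N_2 + N_3 + N_4$ reading digits in some fixed direction, the four binary squares begin and end at different positions, so the ``first half equals second half'' checks for the four summands are interleaved in a complicated way along the common tape of $N$. I would resolve this by reading least-significant-digit first and, for each summand, maintaining a finite ``shift register'' that remembers the low-order half $x_i$ as it is produced (padding with a bounded amount of state to realign) so that when the high-order half is read it can be compared bit by bit; because each $x_i$ is compared against a copy of itself with a fixed offset equal to its own length, and the length is what the automaton must track, one must argue that only a bounded amount of information is needed at any time — the standard trick is to let the automaton guess $|x_i|$ is within a window and verify. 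Handling the edge cases (the summand $0$, summands whose base-$2$ length is odd hence not squares, leading-zero conventions, and very small $N$ where fewer than four nonzero summands are available) is where bugs creep in, so I would be careful to treat $0$ as an allowed summand and to double-check the claimed threshold $686$ and the list of $56$ exceptions against the DFA output. Once the automaton is correct, Steps 3 and 4 are routine decision-procedure bookkeeping.
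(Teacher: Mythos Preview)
Your Step~2 is where the plan breaks down. You propose to ``maintain a finite shift register that remembers the low-order half $x_i$'' of each guessed summand, but $|x_i|$ is unbounded (it can be as large as roughly $\tfrac{1}{2}\log_2 N$), so no finite-state device can store it. This is not a technicality: the language $\{xx : x \in 1\{0,1\}^*\}$ is not regular, so an NFA reading the ordinary base-$2$ expansion of $N$ and guessing the digits of the summands simply cannot verify the square condition. Your vague appeal to ``guessing $|x_i|$ within a window'' does not fix this unless you say \emph{which} window and why a bounded window suffices---and that is precisely the missing number-theoretic content. Consequently your Step~3 assertion, that the set of non-representable $N$ is regular, is unsupported and almost certainly false as stated.

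The paper's resolution involves two ideas you are missing. First, it changes the input encoding: $N$ is presented in a \emph{folded} form, with bit $a_i$ paired with bit $a_{i+m}$ on the same tape cell, so that both halves of a length-$2m$ binary square are read simultaneously and only a bounded misalignment needs to be buffered. Second---and this is the crucial point---the automaton does \emph{not} decide the theorem itself; it decides a strictly stronger lemma asserting that every $N$ of bit-length $n$ is a sum of binary squares whose lengths lie in a fixed finite list such as $\{n-1,n-3,n-5\}$ (for $n$ odd) or $\{n,n-2,n-4,n-6\}$ (for $n$ even). Only with the lengths pinned to within $O(1)$ of $n$ does the folded encoding yield a finite-state verification. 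The paper explicitly remarks that the theorem as stated cannot be embedded directly in a decidable automaton problem, and that a \emph{search} over candidate length-combinations was required to find a stronger lemma that is both true and mechanically checkable. Your proposal needs exactly this: commit to specific summand lengths relative to $|(N)_2|$, prove (by automaton) that those lengths always suffice for large $n$, and handle small $N$ by brute force.
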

The novelty in our approach is that we obtain this theorem in additive number theory {\it using very little number theory at all}.
Instead, we use an approach based on formal language theory, reducing the proof of the
theorem to a decidable language emptiness problem.
Previously we obtained similar results for palindromes
\cite{Rajasekaran:2018,Rajasekaran&Shallit&Smith:2017,Rajasekaran&Shallit&Smith:2018}. 

\subsection{Search for appropriate lemmas and proving the theorem}

The technique we use for encoding Theorem~\ref{main} as a problem dealing with automata is to ask, for all sufficiently large integers $N$, whether there exist four binary squares with
representation $x_i x_i$, $1 \leq i \leq 4$, such that the sum of the numbers they represent is $N$. Since the language of binary squares is not regular, we use an encoding where we represent only
one copy of each $x_i$ and reuse it to represent the number. However, it turns out that we cannot represent the desired theorem directly as an emptiness/universality problem of finite automata. The reason is that when representing only one copy of the $x_i$, we can do ``school addition'' (aligning them and adding the numbers, columnwise, with a carry) only if the words $x_i$ are roughly of the same \emph{length}. More precisely, we require the lengths of the squares employed to either be bounded by a constant, or differ from each other and from the number $N$ only by a bounded length.

For fixed constants $k_i$, $1 \leq i \leq 4$,
we observe that the set of all binary representations
of $N$ for which
there exist four words $x_i$, $1 \leq i \leq 4$, of lengths $L-k_i$, 
such that the binary representation of $N$ is of length
$2L$ and the sum of the numbers represented by $x_i x_i$, $1 \leq i \leq 4$ is $N$, is a regular language.
Thus we can prove, using known decision algorithms for automata, lemmas that assert that all numbers of a particular form can be represented by a sum of four binary squares, where the binary squares are of various lengths $L-k_1$, $L-k_2$, $L-k_3$, and $L-k_4$, for a finite set of tuples $\langle k_1, k_2, k_3, k_4 \rangle$ (see Lemma~5 for such a lemma).

Proving such a lemma for a particular set of combinations of lengths implies the theorem, of course, but the lemma itself is stronger. The truth of such stronger lemmas is decidable, while we don't have a way to directly decide the theorem itself! 

Thus we need a \emph{search} for an appropriate lemma for a particular combination of length differences that is valid. Given that checking these lemmas for any set of combinations is decidable, we can do the search for these lemmas automatically. We tried various combinations and succeeded in proving one lemma, namely Lemma~5, that implies our theorem.

The above technique can be generalized to some extent--- evidently, we could also consider the analogous results for other powers
such as cubes, and bases $b \geq 2$,
but we do not do that in this paper.  

\subsection{Notation}

We are concerned with the binary representation of numbers, so let us
introduce some notation.  If $N$ is a natural number, then by $(N)_2$ we mean the string giving the canonical base-$2$ representation of $N$, having no leading zeroes.
For example, $(43)_2 = 101011$.  The canonical representation of $0$ is $\epsilon$, the empty string.

If $2^{n-1} \leq N < 2^n$ for $n \geq 1$, we say that
$N$ is an {\it $n$-bit integer\/} in base $2$.  
Note that the first bit of the binary representation of an $n$-bit integer
is always nonzero. 
The {\it length\/} of an integer $N$ satisfying
$2^{n-1} \leq N < 2^n$ is defined to be $n$; alternatively, the
length of $N$ is $1 + \lfloor \log_2 N \rfloor$.
For $n \geq 1$ we define 
$C_n = \{ a \cdot (2^n + 1)  \ : \ 
2^{n-1} \leq a < 2^n \},$
the set of all $2n$-bit binary squares.

\section{A classical approach}

In this section we describe how one can apply classical
number-theoretic and combinatorial tools to this problem to obtain some results
weaker than Theorem~\ref{main}.
The idea is to show that the numbers that are the sum
of two binary squares form a set of positive 
lower asymptotic density.  
(In contrast, our approach via automata, which we discuss in
later sections, provides more
precise results.)

For sets $S, T \subseteq \Enn$ we define the sumset
$S + T = \{ s+t \ : \ s \in S,\ t \in T \}.$
The cardinality of a finite set $S$ is denoted by
$|S|$.
Given a set $S \subseteq \Enn$, the {\it lower
asymptotic density} of $S$ is defined to be
$$ d(S) = \liminf_{n \rightarrow \infty} 
\frac{| \{x \in S \ : \ 1 \leq x \leq n \} |}{n} .$$

We first prove

\begin{lemma}
For $n \geq 1$ we have $|C_n + C_{n+1}| = 2^{2n-1}$.
\end{lemma}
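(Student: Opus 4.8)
The plan is to show that the addition map is injective on $C_n \times C_{n+1}$, so that the sumset has exactly as many elements as there are pairs. Recall $C_n = \{a(2^n+1) : 2^{n-1} \le a < 2^n\}$, which has $2^{n-1}$ elements (there are that many integers $a$ in $[2^{n-1},2^n)$, and $a \mapsto a(2^n+1)$ is injective), and likewise $C_{n+1} = \{b(2^{n+1}+1) : 2^n \le b < 2^{n+1}\}$ has $2^n$ elements. Hence it suffices to prove that the map $(a,b) \mapsto a(2^n+1) + b(2^{n+1}+1)$, ranging over $2^{n-1} \le a < 2^n$ and $2^n \le b < 2^{n+1}$, takes $2^{n-1}\cdot 2^n = 2^{2n-1}$ distinct values.

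So suppose $a(2^n+1) + b(2^{n+1}+1) = a'(2^n+1) + b'(2^{n+1}+1)$ with $a,a'$ and $b,b'$ in the stated ranges. Rearranging gives $(a-a')(2^n+1) = (b'-b)(2^{n+1}+1)$. The key observation is that $2^n+1$ and $2^{n+1}+1$ are coprime, since $2(2^n+1) - (2^{n+1}+1) = 1$. Therefore $2^{n+1}+1$ divides $a - a'$.

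It remains to note that the differences are too small for this to happen nontrivially: since $a,a' \in [2^{n-1}, 2^n)$, we have $|a-a'| \le 2^{n-1}-1 < 2^{n+1}+1$, so the only multiple of $2^{n+1}+1$ in that range is $0$. Hence $a = a'$, and then $b = b'$ as well, which gives injectivity and the desired count. I do not expect any real obstacle here beyond spotting the coprimality identity together with the size bound; one could alternatively argue by reading off $b$ from the high-order bits and $a$ from the low-order bits of the representation of the sum, but the $\gcd$ argument is shorter. The same circle of ideas (representations of nearby length interacting cleanly) is what makes the later automaton-based encoding of Theorem~\ref{main} feasible.
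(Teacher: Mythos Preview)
Your proof is correct and is essentially the same as the paper's: both show the addition map on $C_n \times C_{n+1}$ is injective by reducing the equation $a(2^n+1)+b(2^{n+1}+1)=a'(2^n+1)+b'(2^{n+1}+1)$ to a size-bounded congruence. The only cosmetic difference is that the paper reduces modulo $2^n+1$ (using $2^{n+1}+1\equiv -1$) to force $b=b'$ first, whereas you invoke the coprimality identity $2(2^n+1)-(2^{n+1}+1)=1$ to force $a=a'$ first; these are two sides of the same coin.
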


\begin{proof}
Since $|C_n| = 2^{n-1}$ and $|C_{n+1}| = 2^n$,
this lemma is equivalent to the claim that each member
of the sumset $C_n + C_{n+1}$ has a 
{\it unique} representation
as the sum of one element of $C_n$ and one element of
$C_{n+1}$.

We argue by contradiction.  Suppose the representation
is not unique, and there exist
integers $a, a'$ with $2^{n-1} \leq a, a' < 2^n - 1$
and integers $b, b'$ with $2^n \leq b, b' <2^{n+1}$
such that $(a,a') \not= (b,b')$ but
\begin{equation}
a \cdot (2^n + 1) + b \cdot (2^{n+1} + 1)
= a' \cdot (2^n + 1) + b' \cdot (2^{n+1}+1) .
\label{ab}
\end{equation}
Computing Eq.~\eqref{ab} modulo $2^n + 1$, we see that
$-b \equiv \modd{-b'} {2^n + 1}$.  Since 
$2^n \leq b, b' <2^{n+1}$ we see the congruence
in fact implies that $b = b'$.
But then $a = a'$, a contradiction.
\end{proof}

\begin{theorem}
The numbers that are the sum of two binary squares
form a set of lower asymptotic density 
$\geq 1/40$.
\end{theorem}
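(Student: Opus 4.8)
The plan is to exploit the previous lemma, which says that $C_n + C_{n+1}$ contains exactly $2^{2n-1}$ distinct integers, and to show that these integers all lie in a window of size comparable to $2^{2n}$, so that within that window a positive proportion of integers are sums of two binary squares. First I would locate the sumset $C_n + C_{n+1}$: if $a \in C_n$ and $b \in C_{n+1}$ then $a(2^n+1) + b(2^{n+1}+1)$ is at least $2^{n-1}(2^n+1) + 2^n(2^{n+1}+1)$ and at most (roughly) $2^n(2^n+1) + 2^{n+1}(2^{n+1}+1)$, so every element of $C_n + C_{n+1}$ lies in an interval $[\alpha_n, \beta_n]$ with $\beta_n - \alpha_n$ on the order of $5 \cdot 2^{2n}$ (the dominant term being $2^{2n+2}$ from the $b$ range plus $2^{2n}$ from the $a$ range). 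I would compute these endpoints carefully to get the constant $40$: the length of the interval is essentially $3 \cdot 2^{2n-1} + 2^{2n-1} \cdot (\text{something})$, and the count is $2^{2n-1}$, giving a local density of $1/(\text{that constant})$.

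Next, the key point is that these intervals, as $n$ ranges over $1, 2, 3, \ldots$, eventually cover all sufficiently large integers — indeed $\beta_n \geq \alpha_{n+1}$ for large $n$ because $\beta_n$ grows like $2^{2n+2}$ while $\alpha_{n+1}$ grows like $2^{2n+1}$, so consecutive windows overlap. Therefore for every large $N$ there is an $n$ with $N \in [\alpha_n, \beta_n]$, and the number of sums-of-two-binary-squares that are $\leq N$ is at least the number of elements of $C_{n'} + C_{n'+1}$ for the largest $n'$ with $\beta_{n'} \leq N$ (or one can just use the single window containing $N$). Concretely, writing $f(N)$ for the counting function of the set in question, I would show $f(N) \geq 2^{2n-1}$ whenever $N \geq \beta_n$, and since $\beta_n \approx c \cdot 2^{2n}$ this yields $f(N)/N \geq 2^{2n-1}/(c \cdot 2^{2n}) + o(1) = 1/(2c) + o(1)$, and the arithmetic should be arranged so that $2c = 40$.

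The main obstacle — really the only delicate part — is pinning down the constants so that the bound comes out to exactly $1/40$ rather than something weaker. This means being careful about (i) the exact lower and upper endpoints of the range of $a(2^n+1)+b(2^{n+1}+1)$ over the allowed ranges of $a$ and $b$, (ii) the fact that the liminf must be taken over \emph{all} large $N$, not just the $N = \beta_n$, so one loses a bit when $N$ sits at the left end of a window (here the overlap of consecutive windows rescues the bound, provided the overlap is large enough), and (iii) lower-order terms of size $2^n$ that are negligible in the limit but must be tracked to confirm they do not spoil the leading constant. I would set $\alpha_n = 2^{n-1}(2^n+1) + 2^n(2^{n+1}+1)$ and $\beta_n = (2^n-1)(2^n+1) + (2^{n+1}-1)(2^{n+1}+1)$, verify $\beta_n - \alpha_n = (5 \cdot 2^{2n-1}) - O(2^n)$ and $\beta_n = 5\cdot 2^{2n} - O(2^n)$, check $\alpha_{n+1} < \beta_n$ for $n$ large, and then conclude $d(S) \geq 2^{2n-1}/\beta_n \to 1/20$; if the intended constant is $1/40$ rather than $1/20$ the discrepancy is absorbed by the worst-case position of $N$ within a window, and I would present whichever of these careful estimates actually yields the stated $1/40$.
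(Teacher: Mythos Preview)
Your approach is essentially the paper's: use the lemma $|C_n+C_{n+1}|=2^{2n-1}$, note that every element of this sumset is at most $\beta_n\approx 5\cdot 2^{2n}$, and for general $m$ pick the largest $n$ with $\beta_n\le m$ to bound the counting function from below. Two points need correcting.

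First, the windows do \emph{not} overlap: one has $\alpha_{n+1}=2^{n}(2^{n+1}+1)+2^{n+1}(2^{n+2}+1)=10\cdot 2^{2n}+3\cdot 2^{n}$, which exceeds $\beta_n\approx 5\cdot 2^{2n}$. Fortunately overlap is irrelevant to the argument; all you need is that $C_n+C_{n+1}\subseteq[1,\beta_n]$, so $f(m)\ge 2^{2n-1}$ once $m\ge\beta_n$. Drop the overlap discussion entirely.

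Second, your endgame arithmetic is off. At the special value $m=\beta_n$ you would get $2^{2n-1}/\beta_n\to 1/10$ (not $1/20$), but that is not the liminf. The worst case is $m$ just below $\beta_{n+1}\approx 5\cdot 2^{2(n+1)}=20\cdot 2^{2n}$, where you can only certify $2^{2n-1}$ elements of $S_2$ below $m$; this gives
\[
\frac{f(m)}{m}\;\ge\;\frac{2^{2n-1}}{5\cdot 2^{2n+2}}\;=\;\frac{1}{40},
\]
exactly the paper's computation. So the constant $40$ is $8c$ with $c=5$, not $2c$, the extra factor of $4$ coming from the gap between $\beta_n$ and $\beta_{n+1}$.
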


\begin{proof}
Let $S_2$ be the set of numbers that are the sum of two
binary squares.  Clearly $C_n + C_{n+1} \subseteq S_2$.

There are $2^{2n-1}$ elements in the sumset
$C_n + C_{n+1}$, whose largest element is 
$(2^n - 1)2^n + (2^{n+1} - 1)2^{n+1} =
5 \cdot 2^{2n} - 3 \cdot 2^n$.
Given an integer $m\geq 14$, choose $n \geq 1$ such that
$5 \cdot 2^{2n} - 3 \cdot 2^n \leq
	m < 5 \cdot 2^{2n+2} - 3 \cdot 2^{n+1} $.
Then 
$$ \frac{| \{ x \in S_2 \ : \ 1 \leq x \leq m \}|}{m}
\geq \frac{2^{2n-1}}{5 \cdot 2^{2n+2}} = \frac{1}{40}.$$
\end{proof}

\begin{corollary}
The binary squares form an asymptotic basis of finite
order.
\end{corollary}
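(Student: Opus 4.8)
The plan is to derive the corollary directly from the two results immediately preceding it, namely Lemma~2 (the uniqueness of representations, giving $|C_n + C_{n+1}| = 2^{2n-1}$) and Theorem~3 (that $S_2$, the set of sums of two binary squares, has lower asymptotic density $\geq 1/40$). First I would observe that $S_2 \subseteq S_4$, where $S_h$ denotes the set of numbers expressible as a sum of $h$ binary squares; indeed, since $0$ is a binary square, any sum of two binary squares is also a sum of four binary squares (pad with two copies of $0$). Hence $d(S_4) \geq d(S_2) \geq 1/40 > 0$.

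The remaining step is the standard fact that a set $S \subseteq \Enn$ containing $0$ and having positive lower asymptotic density is an asymptotic additive basis of finite order. I would argue as follows: if $d(S) \geq \delta > 0$, then for all sufficiently large $m$ the set $S \cap [1, m]$ has more than $m/2$ elements once we take roughly $\lceil 1/\delta \rceil$-fold sumsets. Concretely, let $T = S_4 \cup \{0\}$ (it already contains $0$); for a target $N$, consider the sets $T \cap [0, N]$ and $N - (T \cap [0,N])$, each of which, for $N$ large, has cardinality $> N/2$ by the density bound (after absorbing a constant factor by passing to an appropriate $h$-fold sumset of $T$ first to boost the density above $1/2$). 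Two subsets of $\{0, 1, \ldots, N\}$ each of size $> (N+1)/2$ must intersect, so $N$ lies in $T + T$. Thus $T + T \supseteq \{N : N \geq N_0\}$ for some threshold $N_0$, and unwinding the definition of $T$ shows every sufficiently large $N$ is a sum of a bounded number of binary squares.

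The one technical point requiring care — and the main (mild) obstacle — is that $d(S_2) \geq 1/40$ only gives density $1/40 < 1/2$, so a single application of the pigeonhole intersection argument to $S_2 + S_2$ is not immediately enough; one must first pass to an $h$-fold sumset to push the lower density past $1/2$. This is routine: if $d(S) \geq \delta$ then $d(\underbrace{S + \cdots + S}_{h}) \to 1$ as $h \to \infty$ for any $S \ni 0$ of positive density (for instance via Kneser's theorem, or more elementarily by noting that translates of a positive-density set by a long arithmetic progression eventually cover a positive-density portion, iterating). Choosing $h$ with $d(S_2^{(h)}) > 1/2$ and then applying the pigeonhole step to $S_2^{(h)} + S_2^{(h)}$ shows $S_2$, hence the binary squares, form an asymptotic basis of order $4h$ — a finite order, which is all the corollary claims. (Of course Theorem~\ref{main} will later show order $4$ suffices, but the corollary only needs finiteness.)
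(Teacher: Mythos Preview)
Your overall strategy matches the paper's: use Theorem~3 (positive lower density of $S_2$) to conclude that the binary squares form an asymptotic basis of finite order. The paper simply invokes Nathanson's result \cite[Theorem~11.6]{Nathanson:2000}: a set $S \subseteq \Enn$ with $0 \in S$, $\gcd(S) = 1$, and positive lower asymptotic density is an asymptotic basis of finite order. You instead try to reprove this from scratch via a density-boosting plus pigeonhole argument.

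There is, however, a genuine gap in your density-boosting step. You assert that for any $S \ni 0$ of positive lower density, $d(\underbrace{S + \cdots + S}_{h}) \to 1$ as $h \to \infty$. This is false as stated: take $S$ to be the even natural numbers. Then $0 \in S$, $d(S) = 1/2 > 0$, yet every iterated sumset is again the even numbers, so the density never exceeds $1/2$, and $S$ is not an asymptotic basis of any order. The missing hypothesis is precisely $\gcd(S) = 1$, which is the third condition in Nathanson's theorem and which you never verify or invoke. Your vague appeals to ``Kneser's theorem'' or ``translates by a long arithmetic progression'' do not repair this; Kneser's theorem, correctly applied, would itself spit out the $\gcd$ obstruction.

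The fix is easy in this particular case: since $3$ and $10$ are both binary squares (hence both in $S_2$, as $3 = 3+0$ and $10 = 10+0$) and $\gcd(3,10)=1$, we have $\gcd(S_2) = 1$. Once you add this observation, your argument can be completed (or, more simply, you can just cite Nathanson's theorem as the paper does). But as written, the proposal contains a false lemma.
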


\begin{proof}
This is a direct consequence of a result of Nathanson 
\cite[Theorem 11.6, p.~366]{Nathanson:2000},
which says that if a subset $S$ of $\Enn$ has $0 \in S$,
$\gcd(S) = 1$, and has positive lower asymptotic density,
then it is an asymptotic basis of finite order.   It is now easy
to check that the hypotheses are fulfilled for $S = S_2$.
\end{proof}

\begin{remark}
It would be interesting to determine the exact lower
asymptotic density of the set $S_2$.  
Numerical computation suggests that perhaps
$d(S_2) \doteq .14$.
\end{remark}

\section{The automaton approach:  the main lemma}

Now we turn to a completely different approach to 
the theorem for binary squares, as sketched in Section~\ref{intro}, 
using automata theory.  This allows us to obtain the upper
bound $4$ for the number of binary squares, a stronger result
than obtained using the classical approach.

Our main lemma is
\begin{lemma}
\leavevmode
\begin{itemize}
\item[(a)] Every length-$n$ integer, $n$ odd, $n \geq 13$, is the sum of binary squares as follows: either
\begin{itemize}
\item one of length $n-1$ and one of length $n-3$, or 
\item two of length $n-1$ and one of length $n-3$, or
\item one of length $n-1$ and two of length $n-3$, or
\item one each of lengths $n-1$, $n-3$, and $n-5$, or
\item two of length $n-1$ and two of length $n-3$, or
\item two of length $n-1$, one of length $n-3$, and
one of length $n-5$.
\end{itemize}
\item[(b)]  Every length-$n$ integer, $n$ even, $n \geq 18$, is the sum of binary squares as follows:  either
\begin{itemize}
\item two of length $n-2$ and two of length $n-4$, or
\item three of length $n-2$ and one of length $n-4$, or
\item one each of lengths $n$, $n-4$, and $n-6$, or
\item two of length $n-2$, one of length $n-4$, and one of length $n-6$.
\end{itemize}
\end{itemize}
\label{aayush}
\end{lemma}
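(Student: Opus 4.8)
The plan is to prove the lemma exactly as Section~\ref{intro} sketches: reduce each part to the emptiness of an explicitly constructed finite automaton. Fix a \emph{length profile} $\langle k_1,\dots,k_r\rangle$, a tuple of non-negative integers with $r\le 4$, and consider the integers $N$ that can be written as a sum $\sum_{i=1}^{r} x_ix_i$ of binary squares in which $x_i$ has length $L-k_i$, where $|(N)_2|=2L$ in the even case and $|(N)_2|=2L+1$ in the odd case. Rather than operating on $(N)_2$ directly, I would have the automaton read the \emph{folded} encoding of $N$: the word over $\{0,1\}\times\{0,1\}$ whose $j$th letter records bit $j$ and bit $L+j$ of $N$, for $j=0,\dots,L-1$ (least significant first), so that the automaton processes the low half and high half of $N$ in parallel. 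Passing between $N$ and its folded encoding is a bijection once the total length is fixed. The point of this encoding is that, although a single copy of $x_i$ has unbounded length, at step $j$ the summand $x_ix_i$ contributes one bit of $x_i$ to the low track and the \emph{same} bit to the high track exactly $k_i$ steps ahead; hence a nondeterministic finite automaton needs to remember only a window of the last $k_i+1$ guessed bits of each $x_i$, plus the column carry on each track and a guessed carry that crosses the fold. Since every $k_i$ occurring in the lemma is at most $3$, this is a bounded amount of information, so for each profile there is an effectively constructible automaton $A_{\langle k_1,\dots,k_r\rangle}$ accepting exactly the folded encodings of the representable $N$; it also enforces that each guessed $x_i$ begins with $1$ and that the high half of $(N)_2$ begins with $1$, with minor adjustments in the odd case, where $(N)_2$ has odd length and its topmost bit comes purely from a carry.

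Next I would form $M_a$ as the union of the automata for the six profiles occurring in part~(a) --- translating each length via ``a summand of length $n-1-2k$ has profile entry $k$'' gives $\langle0,1\rangle$, $\langle0,0,1\rangle$, $\langle0,1,1\rangle$, $\langle0,1,2\rangle$, $\langle0,0,1,1\rangle$, $\langle0,0,1,2\rangle$ --- and $M_b$ as the union of the automata for the four profiles occurring in part~(b), where ``a summand of length $n-2k$ has profile entry $k$'' gives $\langle1,1,2,2\rangle$, $\langle1,1,1,2\rangle$, $\langle0,2,3\rangle$, $\langle1,1,2,3\rangle$. Part~(a) is then precisely the statement that every valid folded encoding of an integer $N$ with $|(N)_2|$ odd and $\ge 13$ lies in $L(M_a)$, and part~(b) the analogous statement for $M_b$ and even lengths $\ge 18$. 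Each of these is an inclusion of one regular language in another: the set of valid folded encodings of integers of the prescribed lengths is plainly regular, so one intersects it with the complement of $M_a$ (resp.\ $M_b$) and tests the result for emptiness. I would carry this out with a standard automaton package, report the sizes of the automata that arise, and confirm that the thresholds $13$ and $18$ in the statement are exactly the least ones for which the two inclusions hold.

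The step I expect to be the main obstacle is none of the individual algorithmic steps but rather getting the automaton construction exactly right: the bookkeeping at the two ends of the fold is delicate. Near the bottom of each $x_i$ the first few bits must be pre-loaded into the window at step $0$; near the top the short copies of $x_i$ fall partly outside the folded range; carries may ``poke above'' the highest summand and, in the odd case, supply the topmost bit of $N$; and the leading-$1$ constraints on the $x_i$ and on $N$ must be verified at the correct steps. An off-by-one error here would either reject some representable $N$ or, worse, accept a spurious one. Spurious acceptance is cheap to guard against --- any $N$ the automaton claims to represent can be decomposed and verified directly --- but ruling out false rejection is precisely what forces the folded, two-track reading order instead of a naive left-to-right scan of $(N)_2$. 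Finally, because the lemma is genuinely stronger than Theorem~\ref{main}, the proof must exhibit this particular finite list of profiles and verify the two inclusions for it; other natural choices of profiles do not work, and the write-up should make clear that it is this specific list that makes the computation succeed.
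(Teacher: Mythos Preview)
Your proposal is correct and follows essentially the same approach as the paper: a folded two-track encoding of $N$, nondeterministic automata that guess one copy of each $x_i$ while maintaining a bounded window of recent bits and the two column carries (plus the carry across the fold), a union over the finite list of length profiles, and a machine-checked language-inclusion test against the regular set of valid encodings. The paper implements the boundary bookkeeping you flag by enlarging the input alphabet with positional subscripts (so the last few letters are distinguished and trigger the ``close-out'' transitions), and it handles the even case with a slightly asymmetric fold that leaves four unpaired high-order bits; but these are exactly the implementation choices your paragraph on end effects anticipates, and the overall architecture is the same.
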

Lemma~\ref{aayush} almost immediately proves
Theorem~\ref{main}:
\begin{proof}
If $N<2^{17} = 131072$, the result can be proved by a completely straightforward computation using dynamic programming:   to form the sumset $S \oplus T$, given finite sets of natural numbers $S$ and $T$, we use a bit vector corresponding to the elements of $S$,
and then take its XOR shifted by each element of $T$.    When we do this, we find that 
there are 
\begin{itemize}
\item 256 binary squares $< 2^{17}$;
\item 19542 numbers $<2^{17}$ that are the sum of two binary squares;
\item 95422 numbers $<2^{17}$ that are the sum of three binary squares;
\item 131016 numbers $<2^{17}$ that are the sum of four binary squares.
\end{itemize}
Otherwise $N \geq 2^{17}$, so $(N)_2$ is a binary string of length $n \geq 18$.   If $n$ is odd, the result follows from Lemma~\ref{aayush} (a).  If $n$ is even, the result follows from Lemma~\ref{aayush} (b).
\end{proof}
It now remains to prove Lemma~\ref{aayush}.   We do this in the next section.

\section{Proof of Lemma~\ref{aayush}}

In this section we prove Lemma~\ref{aayush} in detail.
\begin{proof}
The basic idea is to use nondeterministic finite automata (NFAs).  These are finite-state machines where each input corresponds to multiple computational paths; an input is accepted iff {\it some\/} computational path leads to a final state.  We assume the reader is familiar with the basics of this theory; if not, please consult, e.g., \cite{Hopcroft&Ullman:1979}.  For us, an NFA is a quintuple $(Q, \Sigma, \delta, q_0, F)$, where $Q$ is the set of states,
$\Sigma$ is the input alphabet, $\delta$ is the transition function,
$q_0$ is the initial state, and $F$ is the set of final states.

We
construct an NFA that, on input an integer $N$ written in binary, ``guesses''
a representation as a sum of binary squares, and then verifies that
the sum is indeed $N$.  Everything is done using a reversed representation,
with least significant digits processed
first.  There are some complications, however.  

First, with an NFA we cannot verify that a guessed string is indeed a binary square,
as the language $\{ xx \ : \ x \in 1\{0,1\}^* \}$ is not a regular language.
So instead we only guess the ``first half'' of a binary square.
Now, however, we are forced to choose a slightly unusual representation for
$N$, in order to be able to compare the sum of our guessed powers
with the input $N$.  If $N$ were represented in its ordinary base-$2$ representation, this
would be impossible with an NFA, since once we process the guessed ``first half''
and compare it to the input, we would no longer have the ``second half'' (identical
to the first) to compare to the rest of the input.

To get around this problem, we represent integers $N$ in a kind of ``folded
representation'' over the input alphabet $\Sigma_2 \cup (\Sigma_2 \times \Sigma_2)$, where $\Sigma_k = \{ 0,1, \ldots, k-1 \}$. 
The idea is to present our NFA with two bits of the input string at once,
so that we can add both halves of our guessed powers
at the same time, verifying that we are producing $N$ as we go.
Note that we use slightly different representations for the two parts of Lemma~\ref{aayush}. 
The precise representations are detailed in their respective subsections.

% That is, if $(N)_2 = a_{2i-1} a_{2i-2} \cdots a_1 a_0$, then
% this is represented by the binary string
% $$[a_{i-1}, a_0] [a_{i}, a_1] \cdots [a_{2i-3}, a_{i-2}]a_{2i-2}a_{2i-1}.$$
% We do not fold the two most significant digits of the input, since those digits are produced
% from the carry that results after we add our binary squares.
% Now when we guess the summands, we also guess the ``carry'' that would
% result when we get to the ``middle'' of the input, and then compare the
% sum of the summands to the input.

% Our lemma is then the assertion that every string of length $2i$ for $i \geq 5$
% over the alphabet $\Sigma_2 \cup (\Sigma_2 \times \Sigma_2)$, with last digit 
% $1$ (since the ``leading digit'' of $N$ must be $1$)
% is representable as a sum of at most $3$ binary squares of length
% $2i-4$ and $4$ binary squares of length $2i-2$.

We can now prove Lemma~\ref{aayush} by phrasing it 
as a language inclusion problem. For each of the two parts of the lemma,
we can build an NFA $A$ 
that only accepts such folded strings if they repesent numbers
that are 
the sum of any of the combination of squares as described in the lemma.
We also create an NFA, $B$, that accepts
all valid folded representations that are sufficiently long. We then 
check the assertion that the language recognized by $B$ is a subset of that recognized by $A$.

\subsection{Odd-length inputs}

Again, to flag certain positions of the input tape, we use an extended alphabet.
Define $$\Gamma = \{1_f\} \ \cup \ \bigcup_{\alpha \in \{a,b,c,d,e\}} \{[0,0]_\alpha,[0,1]_\alpha,[1,0]_\alpha,[1,1]_\alpha\}.$$

Let $N$ be an integer, and let $n = 2i+1$ be the length
of its binary representation. We write 
$(N)_2 = a_{2i} a_{2i - 1} \cdots a_1 a_0$ and fold this
to produce the input
string $$[a_{i}, a_0]_a [a_{i+1}, a_1]_a  \cdots [a_{2i-5}, a_{i-5}]_a
[a_{2i-4}, a_{i-4}]_b
[a_{2i-3}, a_{i-3}]_c
[a_{2i-2}, a_{i-2}]_d
[a_{2i-1}, a_{i-1}]_e
 a_{{2i}_f}.$$
 
Let $A_{\rm odd}$ be the NFA that recognizes those odd-length integers, represented in this folded
format, that are the sum of binary squares meeting any of the 6 conditions
listed in Lemma~\ref{aayush} (a). We construct $A_{\rm odd}$ as the union of several automata
$A(t_{n-1},t_{n-3}, m_a)$ and $B(t_{n-1},t_{n-3},t_{n-5}, m_b)$.
The parameters $t_{p}$ represent the number of 
summands of length $p$ we are guessing. 
The parameters $m_a$ and $m_b$ are the carries that 
we are guessing will be produced by the first half of the summed binary squares. $A$-type 
machines try summands of lengths $n-1$ and $n-3$ only, while $B$-type machines 
include at least one $(n-5)$-length summand.
We note that for the purpose of summing, guessing $t$ binary squares
is equivalent to guessing a {\it single\/} square over the larger alphabet $\Sigma_{t+1}$.

We now consider the construction of a single automaton
$$A(t_{n-1},t_{n-3}, m) = (Q \cup \{q_{\rm acc}, q_0, s_1\}, \Gamma, \delta, q_0, \{q_{\rm acc} \}).$$
The elements of $Q$ have 4 non-negative parameters and are of the form $q(x_1, x_2, c_1, c_2)$. Because the $t_{n-3}$ summand is
not aligned with the input, we use our states to ``remember''
our guesses. When we make a guess at the higher end of the
$t_{n-3}$ summand, it must be used as the guess for its lower
end on the \emph {next step}. We remember this guess by storing 
it as the $x_2$ parameter.
The parameter $x_1 \leq t_{n-3}$ is the last digit of 
the guessed summand of length $n-3$. 
We use $c_1$ to track the higher carry, and $c_2$ to track the
lower carry. We must have $c_1, c_2 < t_{n-1} + t_{n-3}$. 

We now discuss the transition function, $\delta$ of our NFA. In this section,
we say that the sum of natural numbers, $\mu_1$ and $\mu_2$, ``produces'' an output
bit of $\theta \in \Sigma_2$ with a ``carry'' of $\gamma$ if 
$\mu_1 + \mu_2 \equiv \theta \;(\bmod\; 2)$ and
$\gamma = \left \lfloor{\frac{\mu_1 + \mu_2}{2}}\right \rfloor $. 

We allow a transition from $q_0$ 
to $q(x_1, x_2, c_1, c_2)$ on the letter $[j,k]_a$ iff
there exists $0 \leq r \leq t_{n-1}$ such that
$x_2 + r + m$ produces an output of $j$ with a carry of $c_1$ and
$x_1 + r$ produces an output of $k$ with a carry of $c_2$. 

We allow a transition from $q(x_1, x_2, c_1, c_2)$ 
to $q(x_1', x_2', c_1', c_2')$ on the letters $[j,k]_a$ and $[j,k]_b$ iff
there exists $0 \leq r \leq t_{n-1}$ such that
$x_2' + r + c_1$ produces an output of $j$ with a carry of $c_1'$ and
$x_2 + r + c_2$ produces an output of $k$ with a carry of $c_2'$. Elements
of $Q$ have identical transitions on inputs with subscripts $a$ and $b$. The 
reason we have the letters with subscript $b$ is for $B$-machines, which guess a summand of length $n-5$.

There is only one letter of the input with the subscript $c$, and it corresponds to
the last higher guess of the summand of length $n-3$. 
We allow a transition from $q(x_1, x_2, c_1, c_2)$ 
to $q(x_1', t_{n-3}, c_1', c_2')$ on the letter $[j,k]_c$ iff
there exists $0 \leq r \leq t_{n-1}$ such that
$t_{n-3} + r + c_1$ produces an output of $j$ with a carry of $c_1'$ and
$x_2 + r + c_2$ produces an output of $k$ with a carry of $c_2'$.

There is only one letter of the input with the subscript $d$, and it corresponds to
the second-last lower guess of the summand of length $n-3$. 
We allow a transition from $q(x_1, t_{n-3}, c_1, c_2)$ 
to $q(x_1', 0, c_1', c_2')$ on the letter $[j,k]_d$ iff
there exists $0 \leq r \leq t_{n-1}$ such that
$r + c_1$ produces an output of $j$ with a carry of $c_1'$ and
$t_{n-3} + r + c_2$ produces an output of $k$ with a carry of $c_2'$.

There is only one letter of the input with the subscript $e$, and it corresponds to
the last lower guess of the summand of length $n-3$. 
We allow a transition from $q(x_1, 0, c_1, c_2)$ 
to $s_1$ on the letter $[j,k]_e$ iff
$t_{n-1} + c_1$ produces an output of $j$ with a carry of $1$ and
$x_1 + t_{n-1} + c_2$ produces an output of $k$ with a carry of $m$.

Finally, we add a transition from $s_1$ to $q_{\rm acc}$ on the letter $1_f$.

We now consider the construction of a single automaton
$$B(t_{n-1},t_{n-3}, t_{n-5}, m) = (P \cup Q \cup \{q_{\rm acc}, q_0, s_1\}, \Gamma, \delta, q_0, \{q_{\rm acc} \}).$$
The elements of $P$ have 6 non-negative parameters and are of the form 
$q(x_1, x_2, y_1, y_3, c_1, c_2)$.
The parameter $x_1 \leq t_{n-3}$ is the last digit of 
the guessed summand of length $n-3$ and
$x_2 \leq t_{n-3}$ is the previous higher guess of the length-$n-3$ summand.
The parameter $y_1 \leq t_{n-5}$ is the last digit of 
the guessed summand of length $n-5$ and
$y_3 \leq t_{n-5}$ is the previous higher guess of the length-$n-5$ summand.
We use $c_1$ to track the higher carry, and $c_2$ to track the
lower carry. We must have $c_1, c_2 < t_{n-1} + t_{n-3} + t_{n-5}$. 
The elements of $Q$ have 8 non-negative parameters and are of the form 
$$q(x_1, x_2, y_1, y_2, y_3, y_4, c_1, c_2).$$
The parameter $x_1 \leq t_{n-3}$ is the last digit of 
the guessed summand of length $n-3$ and
$x_2 \leq t_{n-3}$ is the previous higher guess of the length-$n-3$ summand.
The parameters $y_1, y_2 \leq t_{n-5}$ are the last digit and the second-last
digit of the guessed summand of length $n-5$ respectively.
The parameter
$y_3, y_4 \leq t_{n-5}$ are the two most recent higher guess of the length-$n-5$ summand,
with $y_4$ being the most recent one.
We use $c_1$ to track the higher carry, and $c_2$ to track the
lower carry. We must have $c_1, c_2 < t_{n-1} + t_{n-3} + t_{n-5}$. 

We now discuss the transition function, $\delta$ of our NFA. 
We allow a transition from $q_0$ 
to $p(x_1, x_2, y_1, y_3, c_1, c_2)$ on the letter $[j,k]_a$ iff
there exists $0 \leq r \leq t_{n-1}$ such that
$x_2 + y_3 + r + m$ produces an output of $j$ with a carry of $c_1$ and
$x_1 + y_1 + r$ produces an output of $k$ with a carry of $c_2$. 

We use a transition from $p(x_1, x_2, y_1, y_3, c_1, c_2)$
to $q(x_1, x_2', y_1, y_2', y_3, y_4', c_1', c_2')$ on the letter $[j,k]_a$ iff
there exists $0 \leq r \leq t_{n-1}$ such that
$x_2' + y_4' + r + c_1$ produces an output of $j$ with a carry of $c_1$ and
$x_2 + y_2' + r + c_2$ produces an output of $k$ with a carry of $c_2$. 

We use a transition from $q(x_1, x_2, y_1, y_2, y_3, y_4, c_1, c_2)$
to $q(x_1, x_2', y_1, y_2, y_4, y_4', c_1', c_2')$ on the letter $[j,k]_a$ iff
there exists $0 \leq r \leq t_{n-1}$ such that
$x_2' + y_4' + r + c_1$ produces an output of $j$ with a carry of $c_1$ and
$x_2 + y_3 + r + c_2$ produces an output of $k$ with a carry of $c_2$. 

We use a transition from $q(x_1, x_2, y_1, y_2, y_3, t_{n-5}, c_1, c_2)$
to $q(x_1, x_2', y_1, y_2, t_{n-5}, t_{n-5}, c_1', c_2')$ on the letter $[j,k]_b$ iff
there exists $0 \leq r \leq t_{n-1}$ such that
$x_2' + r + c_1$ produces an output of $j$ with a carry of $c_1$ and
$x_2 + y_3 + r + c_2$ produces an output of $k$ with a carry of $c_2$. 

We use a transition from $q(x_1, x_2, y_1, y_2, t_{n-5}, t_{n-5}, c_1, c_2)$
to $q(x_1, t_{n-3}, y_1, y_2, t_{n-5}, t_{n-5}, c_1', c_2')$ on the letter $[j,k]_c$ iff
there exists $0 \leq r \leq t_{n-1}$ such that
$t_{n-3} + r + c_1$ produces an output of $j$ with a carry of $c_1$ and
$x_2 + y_3 + r + c_2$ produces an output of $k$ with a carry of $c_2$. 

We use a transition from $q(x_1, t_{n-3}, y_1, y_2, t_{n-5}, t_{n-5}, c_1, c_2)$
to $q(x_1, t_{n-3}, y_1, y_2, t_{n-5}, t_{n-5}, c_1', c_2')$ on the letter $[j,k]_d$ iff
there exists $0 \leq r \leq t_{n-1}$ such that
$r + c_1$ produces an output of $j$ with a carry of $c_1$ and
$t_{n-3} + y_1 + r + c_2$ produces an output of $k$ with a carry of $c_2$. 

We use a transition from $q(x_1, t_{n-3}, y_1, y_2, t_{n-5}, t_{n-5}, c_1, c_2)$
to $s_1$ on the letter $[j,k]_e$ iff
$t_{n-1} + c_1$ produces an output of $j$ with a carry of $1$ and
$x_1 + y_2 + t_{n-1} + c_2$ produces an output of $k$ with a carry of $m$. 

Finally, we add a transition from $s_1$ to $q_{\rm acc}$ on the letter $1_f$.

We now turn to verification of the inclusion assertion.
We used the Automata Library toolchain of the {\tt ULTIMATE} program analysis 
framework \cite{Heizmann&co:2013,Heizmann&co:2016} to establish our results. The {\tt ULTIMATE}
code proving our result can be found in the file $\tt OddSquareConjecture.ats$
at \codelink.\\ Since the constructed machines get very large, 
we wrote a C++ program generating these machines, which can be found 
in the file
$\tt OddSquares.cpp$ at  the same location.

The final machine, $A_{\rm odd}$, has 2258 states. The syntax checker, $B$, has 8 states. 
We then asserted that the language recognized by $B$ is a subset of that recognized by $A$. 
ULTIMATE verified this assertion in under a minute. 
Since this test succeeded, the proof of Lemma~\ref{aayush} (a) is
complete. 

\subsection{Even-length inputs}

In order to flag certain positions of the input tape, we use an extended alphabet.
Define $$\Gamma = \left(\bigcup_{\alpha \in \{a,b,c,d,e\}} \{[0,0]_\alpha,[0,1]_\alpha,[1,0]_\alpha,[1,1]_\alpha\} \right)
\cup  \left(\bigcup_{\beta \in \{f,g,h,i\}} \{0_\beta, 1_\beta\} \right).$$ 

Let $N$ be an integer, and let $n = 2i+4$ be the length
of its binary representation. We write 
$(N)_2 = a_{2i+3} a_{2i+2} \cdots a_1 a_0$ and fold this
to produce the input
string $$[a_{i}, a_0]_a [a_{i+1}, a_1]_b [a_{i+2}, a_2]_c [a_{i+3}, a_3]_c \cdots [a_{2i-3}, a_{i-3}]_c
[a_{2i-2}, a_{i-2}]_d
[a_{2i-1}, a_{i-1}]_e a_{2i_f} a_{{2i+1}_g} a_{{2i+2}_h} a_{{2i+3}_i}.$$

Let $A_{\rm even}$ be the NFA that recognizes the even-length integers, represented in this folded
format, iff the integer is the sum of binary squares meeting any of the 4 conditions
listed in Lemma~\ref{aayush} (b). We construct $A_{\rm even}$ as the union of several automata
$A(t_n,t_{n-2},t_{n-4},t_{n-6}, m)$. The parameters $t_{p}$ represent the number of 
summands of length $p$ we are guessing. 
The parameter $m$ is the carry that 
we are guessing will be produced by the first half of the summed binary squares.
Again, guessing $t$ binary squares
is equivalent to guessing a {\it single\/} square over the larger alphabet $\Sigma_{t+1}$.

We now consider the construction of a single automaton
$$A(t_n,t_{n-2},t_{n-4},t_{n-6}, m) = (Q \cup \{q_{\rm acc} \}, \Gamma, \delta, q_0, \{q_{\rm acc} \}).$$
The elements of $Q$ have 8 non-negative parameters and are of the form 
$q(x_1, x_2, x_3, y_1, z_1, z_2, c_1, c_2)$.
The parameter $x_1$ is the second digit of the guessed summand of length $n$. The parameters
$x_2$ and $x_3$ represent the previous 2 lower guesses of the length-$n$ summand; these 
must be the next 2 higher guesses of this summand. The parameter $y_1$ represents
the previous lower guess of the length-$(n-2)$ summand. We set $z_1$ as the last
digit of the guessed summand of length $n-6$, while $z_2$ is the previous higher 
guess of this summand. Finally, $c_1$ tracks the lower carry, while $c_2$ tracks the
higher carry. For any $p$, we must have $x_p \leq t_n$, $y_p \leq t_{n-2}$, $z_p \leq t_{n-6}$, and 
$c_p < t_n + t_{n-2} + t_{n-4} + t_{n-6}$. 
The initial state, $q_0$, is $q(0,0,0,0,0,0,0,0)$.

We now discuss the transition function, $\delta$ of our NFA. Note that in 
our representation of even-length 
integers, the first letter of the input must have the subscript
$a$, and it is the only letter to do so. 
We only allow the initial state to have outgoing transitions on such letters. 

We allow a transition from $q_0$ 
to $q(x_1, 0, x_3, y_1, z_1, z_2, c_1, c_2)$ on the letter $[j,k]_a$ iff
there exists $0 \leq r \leq t_{n-4}$ such that
$x_1 + t_{n-2} + r + z_2 + m$ produces an output of $j$ with a carry of $c_2$ and
$x_3 + y_1 + r + z_1$ produces an output of $k$ with a carry of $c_1$. 

The second letter of the input must have the subscript
$b$, and it is the only letter to do so.
We allow a transition from $q(x_1, 0, x_3, y_1, z_1, z_2, c_1, c_2)$ 
to $q(x_1, x_3, x_3', y_1', z_1, z_2', c_1', c_2')$ on the letter $[j,k]_b$ 
iff
there exists $0 \leq r \leq t_{n-4}$ such that
$t_n + y_1 + r + z_2' + c_2$ produces an output of $j$ with a carry of $c_2'$ and
$x_3' + y_1' + r + z_2 + c_1$ produces an output of $k$ with a carry of $c_1'$. 

We allow a transition from $q(x_1, x_2, x_3, y_1, z_1, z_2, c_1, c_2)$ 
to $q(x_1, x_3, x_3', y_1', z_1, z_2', c_1', c_2')$ on the letter $[j,k]_c$ 
iff
there exists $0 \leq r \leq t_{n-4}$ such that
$x_2 + y_1 + r + z_2' + c_2$ produces an output of $j$ with a carry of $c_2'$ and
$x_3' + y_1' + r + z_2 + c_1$ produces an output of $k$ with a carry of $c_1'$. 

The letter of the input with the subscript
$d$ corresponds to the last guess of the lower half of the summand of length 
$n-6$, and it is the only letter to do so.
We allow a transition from $q(x_1, x_2, x_3, y_1, z_1, t_{n-6}, c_1, c_2)$ 
to $q(x_1, x_3, x_3', y_1', z_1, 0, c_1', c_2')$ on the letter $[j,k]_d$ 
iff
there exists $0 \leq r \leq t_{n-4}$ such that
$x_2 + y_1 + r + c_2$ produces an output of $j$ with a carry of $c_2'$ and
$x_3' + y_1' + r + t_{n-6} + c_1$ produces an output of $k$ with a carry of $c_1'$. 

The letter of the input with the subscript
$e$ corresponds to the last guess of both halves of the summand of length 
$n-4$, and it is the only letter to do so.
We allow a transition from $q(x_1, x_2, x_3, y_1, z_1, 0, c_1, c_2)$ 
to $q(x_1, x_3, x_3', y_1', 0, 0, 0, c_2')$ on the letter $[j,k]_e$ 
iff
$x_2 + y_1 + t_{n-4} + c_2$ produces an output of $j$ with a carry of $c_2'$ and
$x_3' + y_1' + t_{n-4} + z_1 + c_1$ produces an output of $k$ with a carry of $m$. 

We allow a transition from $q(x_1, x_2, x_3, y_1, 0, 0, 0, c_2)$ 
to $q(x_1, x_3, 0, 0, 0, 0, 0, c_2')$ on the letter $j_f$ 
iff
$x_2 + y_1 + c_2$ produces an output of $j$ with a carry of $c_2'$. 

We allow a transition from $q(x_1, x_2, 0, 0, 0, 0, c_2)$ 
to $q(x_1, 0, 0, 0, 0, 0, 0, c_2')$ on the letter $j_g$ 
iff
$x_2 + t_{n-2} + c_2$ produces an output of $j$ with a carry of $c_2'$. 

We allow a transition from $q(x_1, 0, 0, 0, 0, 0, c_2)$ 
to $q(0, 0, 0, 0, 0, 0, 0, c_2')$ on the letter $j_h$ 
iff
$x_1 + c_2$ produces an output of $j$ with a carry of $c_2'$. 

We allow a transition from $q(0, 0, 0, 0, 0, 0, 0, c_2)$ 
to $q_{\rm acc}$ on the letter $1_i$ 
iff
$t_n + c_2$ produces an output of $1$ with a carry of $0$. 

The final machine, $A_{\rm even}$ is constructed as the union of 15 automata:
\begin{itemize}
\item $A(0,2,2,0,m)$, varying $m$ from 0 to 3
\item $A(0,3,1,0,m)$, varying $m$ from 0 to 3
\item $A(1,0,1,1,m)$, varying $m$ from 0 to 2
\item $A(0,2,1,1,m)$, varying $m$ from 0 to 3
\end{itemize}

We now turn to verification of the inclusion assertion.  The {\tt ULTIMATE}
code proving our result can be found in the file $\tt EvenSquareConjecture.ats$
at \codelink.\\ Since the constructed machines get very large, 
we wrote a C++ program generating these machines, which can be found 
in the file
{\tt EvenSquares.cpp} at the same location.

The final machine, $A_{\rm even}$, has 1343 states. The syntax checker, $B$, has 12 states. 
We then asserted that the language recognized by $B$ is a subset of that recognized by $A$. 
ULTIMATE verified this assertion in under a minute. 
Since this test succeeded, the proof of Lemma~\ref{aayush} (b) is
complete. 
% Before we describe the construction of  $A_{j,k,l}$, there are some additional complications worth discussing.
% As an example, let $(N)_2 = a = a_{2i-1} a_{2i-2} \cdots a_1 a_0$ be produced as the sum of 
% $b = b_{i-2} b_{i-3} \cdots b_0 b_{i-2} b_{i-3} \cdots b_0$ and 
% $c = c_{i-3} c_{i-4} \cdots c_0 c_{i-3} c_{i-4} \cdots c_0$, with $b_t \in \Sigma_{j+1}$
% and $c_t \in \Sigma_{k+1}$. As discussed before, we input $(N)_2$ as
% $[a_{i-1}, a_0] [a_{i}, a_1] \cdots [a_{2i-3}, a_{i-2}]a_{2i-2}a_{2i-1}$. We have an ``off-by-one'' error
% when processing this input, because $b$ and $c$ are of unequal lengths. The digit $a_0$ is produced as
% the sum of $b_0$ and $c_0$, but $a_{i-1}$ is produced as the sum of $b_0$ and $c_1$.
% This produces two complications. Firstly, it means the element of $c$ being guessed for the 
% more significant half of $(N)_2$ is always one ahead of the element of $c$ being
% guessed for the less significant half
% of $(N)_2$. Secondly, it means we need to ``remember'' $c_0$ until we near the end of the input tape,
% because its second appearance in $c$ is only relevant when we get to $ [a_{2i-3}, a_{i-2}]$
% on the input tape.

\end{proof}

\begin{corollary}
Given an integer $N > 686$, we can find an expression for $N$ as the sum of four binary squares in time linear in
$\log N$.
\end{corollary}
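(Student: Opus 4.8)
The plan is to exploit the fact that our proof of Theorem~\ref{main} is fully constructive, so that the only real work is extracting an actual witnessing decomposition --- not merely a yes/no answer --- from the automata of the previous section within the stated bound. I would split on the size of $N$. If $686 < N < 2^{17}$, there is nothing to do: $N$ ranges over a fixed finite set, a decomposition into four binary squares exists by Theorem~\ref{main}, and it may be read from a precomputed table in constant time, which is $O(\log N)$.

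So assume $N \ge 2^{17}$ and let $n$ be the length of $(N)_2$, so $n \ge 18$. First I would read off $(N)_2$ and fold it into the input word $w$ over $\Gamma$ exactly as in the proof of Lemma~\ref{aayush}; this costs $O(n)$. Let $A$ be $A_{\rm odd}$ if $n$ is odd and $A_{\rm even}$ if $n$ is even. In either case $A$ is a \emph{fixed} NFA whose number of states and maximal out-degree are constants independent of $N$, and by Lemma~\ref{aayush} --- equivalently, by the inclusion $L(B) \subseteq L(A)$ already verified with {\tt ULTIMATE}, together with the fact that $w$ is a valid folded representation of a length-$n$ integer and hence lies in $L(B)$ --- the word $w$ is accepted by $A$.

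Next I would recover an accepting path $\pi$ of $A$ on $w$ by the standard two-pass method. Scanning $w$ left to right, maintain the set $R_j \subseteq Q$ of states reachable on the length-$j$ prefix of $w$; since $|Q|$ and the out-degree are constant, each update $R_j \mapsto R_{j+1}$ takes $O(1)$ time, so $R_0, \ldots, R_n$ are computed and stored in $O(n)$ time. Because $q_{\rm acc} \in R_n$, then walk backwards from $q_{\rm acc}$ in layer $n$, at each step choosing any transition into the current state whose source lies in the previous set in the sequence $R_0, \ldots, R_n$; this yields $\pi$ in $O(n)$ further time.

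Finally I would read the four binary squares directly off $\pi$. The path lies in one component automaton, which fixes the multiset of summand lengths (one of the six or four combinations of Lemma~\ref{aayush}), and the label of the $j$-th transition of $\pi$, together with the parameters of the states it joins, records at position $j$ the aggregate digit contributed by each family of equal-length summands; an aggregate digit $r$ standing for $t$ parallel summands is split into $t$ bits arbitrarily, which never fails since $0 \le r \le t$, while the boundary transitions force $r = t$ at the most significant position of each family, so each reconstructed string really does begin with $1$ and is a genuine binary square. Concatenating these bits, duplicating each half, and padding with the binary square $0$ if fewer than four nonzero summands occur gives the desired four binary squares; that they sum to $N$ is guaranteed because $\pi$ is accepting. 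Summing the costs gives $O(n) = O(\log N)$. I expect the only point needing care to be the linear-time \emph{path extraction} itself (as opposed to mere acceptance testing), handled by the backward pass above; everything else is mechanical from the construction in the proof of Lemma~\ref{aayush}.
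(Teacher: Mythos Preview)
Your argument is correct and follows the same strategy as the paper: treat the small range $686 < N < 2^{17}$ by table lookup, and for larger $N$ find an accepting run of the fixed NFA $A_{\rm odd}$ or $A_{\rm even}$ on the folded word for $N$, then read the summands off that run. The paper phrases the large-$N$ step slightly differently --- it forms the product of $A$ with a single-string automaton for the folded representation of $N$ and then does a depth-first search for an initial--final path --- whereas you do the equivalent on-line NFA simulation (forward reachable sets $R_j$) followed by a backward trace; these are two standard implementations of the same idea, and both run in $O(\log N)$ since $A$ has constant size. Your write-up is in fact a bit more explicit than the paper's about recovering the individual summand bits from the aggregate guess $r$ along each transition and about padding with the binary square $0$, which is helpful; the only cosmetic slip is that the folded word has length roughly $n/2$ rather than $n$, but this does not affect the $O(\log N)$ bound.
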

\begin{proof}
For $N < 131072$, we do this with a simple brute-force search via dynamic
programming, as explained previously.
Otherwise we construct the appropriate automaton $A$ 
(depending on whether the binary representation of $N$
has either even or odd length).  
Now carry out the usual direct product construction for intersection of languages on $A$ and $B$, where $B$ is 
the automaton accepting the folded binary representation of
$N$.  The resulting automaton has at most $c\log N$ states
and transitions.  Now use the usual depth-first search
of the transition graph to find a path from the initial
state to a final state.  The labels of this path gives
the desired representation.
\end{proof}

\subsection{Ensuring correctness}

 As in every machine-based proof, we want some assurance that our calculations were correct.

 We tested our 
 machine by calculating those integers of length 8 that can be expressed 
 as the sum of up to 3 binary squares of length 4, and up to 4 binary squares of length 6.
 We then used the {\tt ULTIMATE} framework to test that those length-8 integers
 are accepted by our machine, but all others are rejected. The code running this
 test can be found as ${\tt Minus2Minus4SquareConjecture - Test 1}$ at 
 \codelink.

 We also tested the machine by calculating those integers of length 10
 that can be expressed as the sum of up to to 2 binary squares
 of length 6, and up to 4 binary squares of length 8. 
 We then built the analogous machine and confirmed that these
 length-10 integers are accepted, but all others are rejected. 
 We then repeated this test for those integers of length 10
 that can be expressed as the sum of up to to 3 binary squares
 of length 6, and up to 3 binary squares of length 8. 
The code running these
 tests can be found as ${\tt Minus2Minus4SquareConjecture - Test 2}$ 
 and ${\tt Minus2Minus4SquareConjecture - Test 3}$ at \codelink.

%Further testing will involve calculating the analogous machine that accepts words
%that are the sum of fewer binary squares
%(say, up to 3 binary squares of length $n-4$ and 3 of length $n-2$),
%and  asserting that this smaller machine accepts those input integers
%that are so expressible and rejects all others.

\section{Optimality}
In this section we show that the ``4'' in Theorem~\ref{main} is optimal.

\begin{theorem}
For $n \geq 1$, $n$ odd,
$n \not= 9$, the number $2^n$ is not the sum of three or
fewer (positive) binary squares.
\end{theorem}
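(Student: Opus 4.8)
The plan is to strip away the easy magnitude constraints, reduce the statement to a clean congruence condition on a single ``small'' summand, and then settle that condition by directly inspecting binary representations.

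First I would pin down the shape of any hypothetical decomposition $2^n = s_1 + \cdots + s_r$ into $r \le 3$ positive binary squares. Each $s_i$ is $< 2^n$, hence --- being a binary square, so of even length --- has length at most $n-1$ (here the oddness of $n$ is crucial), i.e.\ $s_i < 2^{n-1}$. Then $2^n = \sum s_i < r\,2^{n-1}$ forces $r = 3$, and a second counting step (a summand not of length $n-1$ has even length $\le n-3$, hence is $< 2^{n-3}$, so two small summands contribute $< 2^{n-1} + 2\cdot 2^{n-3} < 2^n$) shows at least two of the three, say $s_1,s_2$, have length exactly $n-1$. Writing $h = (n-1)/2$ and $m = 2^h+1$, the binary squares of length $n-1$ are exactly the multiples $a\cdot m$ with $2^{h-1}\le a < 2^h$, so $s_1 + s_2 \equiv 0 \pmod m$. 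Since $2^h \equiv -1 \pmod m$, we have $2^n = 2\,(2^h)^2 \equiv 2 \pmod m$, whence $s_3 \equiv 2 \pmod m$; in particular $s_3$ is not divisible by $m$, so (for $n\ge 3$) $s_3$ is not of length $n-1$ either, and therefore $0 < s_3 < 2^{n-3}$. The case $n=1$ is trivial, as the smallest positive binary square is $3$.

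It thus suffices to show that for $h\ne 4$ there is no binary square $s_3$ with $0 < s_3 < 2^{2h-2}$ and $s_3 \equiv 2 \pmod{2^h+1}$. I would write $s_3 = w(2^q+1)$ with $2^{q-1}\le w < 2^q$ and $1\le q\le h-1$, and $s_3 = k(2^h+1)+2$ with $0\le k < 2^{h-2}$. Since $2$ is not a binary square, $k\ge 1$, which forces $h\ge 3$ and $k+2 < 2^{h-1}$; consequently in $s_3 = k\cdot 2^h + (k+2)$ the two blocks occupy disjoint bit ranges, so the base-$2$ expansion of $s_3$ is (rep of $k$), then a run of zeros, then (rep of $k+2$). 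Comparing this with the binary-square form $(\,\mathrm{rep}\ w)(\mathrm{rep}\ w)$: reducing $w\cdot 2^q + w = k\cdot 2^h + (k+2)$ modulo $2^q$ gives $w\equiv k+2 \pmod{2^q}$, and reading off the top half of the length-$2q$ string gives $w = k\,2^{h-q} + \lfloor (k+2)/2^q\rfloor$. As $w < 2^q$, the first relation forces $k\,2^{h-q} < 2^q$, hence $h < 2q$ (so $q\ge 2$) and $k+2 < 2^q$; the two relations then collapse to $w = k+2$ and $w = k\,2^{h-q}$, i.e.\ $k(2^{h-q}-1) = 2$. This forces $k = 2$, $h-q = 1$, $w = 4$, and then $2^{q-1}\le 4 < 2^q$ forces $q = 3$, $h = 4$, $n = 9$ --- precisely the excluded value (indeed $2^9 = 238+238+36$). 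Hence for odd $n\ne 9$ no such $s_3$ exists, so $2^n$ is not a sum of three or fewer positive binary squares.

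The step I expect to be the main obstacle is the bit-level bookkeeping in the third paragraph: arguing cleanly that the blocks ``$\mathrm{rep}\ k$'' and ``$\mathrm{rep}\ (k+2)$'' inside $s_3$ really are separated by a genuine run of zeros, faithfully translating ``$s_3$ is a binary square of half-length $q$'' into the two displayed identities, and disposing of the degenerate small-parameter cases ($k=0$, $q=1$, $h\le 2$). One must also be careful, throughout the first step, that it is exactly the hypothesis ``$n$ odd'' that makes ``not of length $n-1$'' coincide with ``of length $\le n-3$'', so that the size estimates there go through.
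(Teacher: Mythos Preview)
Your argument is correct. The reduction in your first paragraph --- using the parity of $n$ to force all summands to have length $\le n-1$, then pushing two of them up to length exactly $n-1$, and extracting the congruence $s_3\equiv 2\pmod{2^h+1}$ --- is exactly the paper's reduction (the paper checks the very small cases $n\le 7$ by hand, whereas you absorb them into the main argument via the observation that $k\ge 1$ together with $k<2^{h-2}$ already forces $h\ge 3$).

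Where you and the paper diverge is in resolving the congruence $s_3\equiv 2\pmod{2^h+1}$ for a binary square $s_3=w(2^q+1)$ with $q<h$. The paper proves a standalone lemma: writing $w=t\cdot 2^{h-q}+u$ with $0\le u<2^{h-q}$, one has $w(2^q+1)\bmod(2^h+1)=t(2^{h-q}-1)+u(2^q+1)$, and then argues that this residue is minimised at $w=2^{q-1}$ (so $u=0$), giving $2^{2q-h-1}(2^{h-q}-1)>2$ unless $(h,q)=(4,3)$. You instead write $s_3=k(2^h+1)+2$ directly and equate the two expressions for $s_3$ bit by bit, arriving at the pair $w=k+2$, $w=k\cdot 2^{h-q}$ and hence $k(2^{h-q}-1)=2$. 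This is the same equation the paper's lemma would yield in the case $u=0$; your route shows, in effect, that $u=0$ is forced without ever isolating the residue formula. Your endgame is a little more hands-on but avoids the auxiliary lemma and the minimisation step; the paper's version packages the residue computation once and for all, at the cost of a separate statement. Both are short.

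Two small points of polish: when you write ``the first relation forces $k\,2^{h-q}<2^q$'', you mean the second of your two displayed relations (the one expressing $w$ as $\lfloor s_3/2^q\rfloor$); and the step ``hence $k+2<2^q$'' deserves one more line --- it follows because $k<2^{2q-h}\le 2^{q-1}$ (using $q\le h-1$) and $q\ge 2$.
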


\begin{proof}
Let $m \geq 0$ and
$n = 2m+1 $ be odd.  The cases $m = 0, 1,2, 3$ are easy to verify
by hand, so assume $m \geq 4$.  

In what follows we distinguish between ``mod'' used in the ordinary notion of 
congruence (where $x \equiv \modd{a} {b}$ means that $b$ divides $x-a$),
and the use of ``mod'' as a function, where $x = {a \bmod b}$ means
both that $x \equiv \modd{a} {b}$ and that $0 \leq a < b$.

Clearly $N := 2^n$ is not a binary square.

Suppose $N$ is the sum of two positive binary squares.
The largest binary square $< N$ is clearly $2^{2m} - 1$.
Hence the sum of two binary squares is either larger than $N$,
or no larger than $2(2^{2m} - 1) = 2^{2m+1} - 2 < N$, a contradiction.

The remaining case is that $2^{2m+1}$ is the sum of three binary
squares, say $N = A + B + C$ with 
$$A = a (2^e + 1) \geq B = b (2^f + 1) \geq C = c (2^g + 1)$$
with $e \geq f \geq g$ and $2^{e-1} \leq a < 2^e$,
$2^{f-1} \leq b < 2^f$, and
$2^{g-1} \leq c < 2^g$.  Clearly $1 \leq e, f, g \leq m$.

We first observe that $e = m$.  For otherwise, $e \leq m-1$ and the
inequality $e \geq f \geq g$ implies
$$N =  A+ B + C \leq  3(2^{m-1} - 1)2^{m-1} < 3 \cdot 2^{2m-2} < N,$$
a contradiction.

Similarly, we observe that $f = m$.  For otherwise
$$ N = A+ B +C \leq  (2^m - 1)2^m + 2(2^{m-1} -1)2^{m-1} 
< 3 \cdot 2^{2m-1} < N,$$
a contradiction.   

Thus, setting $d = a+b$, we see that
$N = d(2^m + 1) + c (2^g + 1)$ 
where $2^m \leq d \leq 2^{m+1} - 2$.  
Suppose $d = 2^{m+1}-2$.  Then 
$N = d(2^m + 1) + c (2^g + 1)$ 
implies that $C = c (2^g + 1) = 2$.  But $C = 2$ is not a binary square.
So in fact  $2^m \leq d  \leq 2^{m+1} - 3$.

Next we argue that $g > m/2$.  For otherwise $g \leq m/2$ and we have
$$N = d(2^m + 1) + c (2^g + 1) \leq (2^{m+1}-3)(2^m + 1) + (2^{m/2} - 1)(2^{m/2} + 1) = 2^{2m+1} - 4 = N-4,$$
a contradiction.

Next we argue that $g < m$.  For otherwise $g = m$ and then
$N = 2^{2m+1} =  A + B + C = (a+b+c)(2^m +1)$.  But then $2^{2m+1}$ is
divisible by the odd number $2^m + 1$, a contradiction.

Now consider the equation $N = d(2^m + 1) + c (2^g + 1)$ and take it
modulo $2^m + 1$.  We have
$2^{2m + 1} - 2 = 2 (2^m - 1)(2^m + 1)  \equiv \modd{0} {2^m + 1}$,
and so $N = 2^{2m+1} \equiv \modd{2} {2^m + 1}$.

Thus we get 
\begin{equation}
c (2^g + 1) \equiv \modd{2} {2^m + 1}.
\label{con}
\end{equation}
It suffices to show that the congruence \eqref{con} has no solutions
in the possible range for $c$,
except when $m = 4$ and $g = 3$.
In order to see this, we need a technical lemma.

\begin{lemma}
Suppose $m, g \geq 1$ are integers with
$m/2 < g < m$.  Suppose $c$ is an integer with
$2^{g-1} \leq c < 2^g$.  Using Euclidean division,
find the unique expression of
$c$ as $t\cdot 2^{m-g} + u$ for $0 \leq u < 2^{m-g}$.
Then
$$ c(2^g + 1) \bmod (2^m + 1) = t(2^{m-g} - 1) + u(2^g + 1) .$$
\end{lemma}

\begin{proof}
We have
\begin{align*}
c (2^g + 1) &= (t \cdot 2^{m-g} + u) (2^g + 1) \\
&= t \cdot 2^m + t \cdot 2^{m-g} + u(2^g + 1) \\
&= t (2^m + 1) + t (2^{m-g} - 1) + u (2^g + 1) \\
&\equiv \modd{ t (2^{m-g} - 1) + u (2^g + 1) } {2^m + 1} .
\end{align*}
This last congruence alone does not prove what we want; we also have to
show that 
$$0 \leq t(2^{m-g} - 1) + u(2^g + 1) < 2^m + 1$$
so that the
residues don't ``wrap around'' when computed modulo $2^m + 1$.
However, $t = \lfloor c/2^{m-g} \rfloor = 2^{2g-m} - 1$, and so
\begin{align*}
t(2^{m-g} - 1) + u(2^g + 1) & \leq 
(2^{2g-m} - 1)(2^{m-g} - 1) + (2^{m-g} - 1)(2^g + 1) \\
&= 2^m - 2^{2g-m} < 2^m + 1,
\end{align*}
as desired.  
\end{proof}

Now from the Lemma we see that the
expression $c (2^g + 1) \bmod (2^m + 1)$
achieves its smallest value when $ c= 2^{g-1} $ (for
then $t = 2^{2g-m-1}$ and $u = 0$), and this smallest value
is $2^{2g-m-1} (2^{m-g} - 1) > 2$, except when
$m = 4$, $g = 3$.  
\end{proof}

\begin{remark}
When $m = 4$ and $g = 3$, letting $c = 28$ and $d = 4$ we
get the solution $512 = 2^9 = 28 \cdot (2^4 + 1) + 4 \cdot (2^3 + 1)$.
This corresponds to two distinct expressions of $2^9$ as the sum of three
binary squares: $512 = 255 + 221 + 36$ and
$512 = 238 + 238 + 36$.
\end{remark}

\section{Other results}

Our technique can be used to obtain other results in additive number
theory.  For example, recently Crocker \cite{Crocker:2008} and Platt \& Trudgian \cite{Platt&Trudgian:2016}
studied the integers representable as the sum of two ordinary squares and two powers of $2$.  The analogue of this theorem is the following:

\begin{lemma}
\leavevmode
\begin{itemize}
\item[(a)] Every length-$n$ integer, $n$ odd, $n \geq 7$, is the sum of 
at most two powers of 2 and either:
\begin{itemize}
\item at most two binary squares of length $n-1$, or 
\item at most one binary square of length $n-1$ and one of length $n-3$.
\end{itemize}
\item[(b)] Every length-$n$ integer, $n$ even, $n \geq 10$, is the sum of 
at most two powers of 2 and either:
\begin{itemize}
\item at most one binary square of length $n$ and one of length $n-4$, or 
\item at most one binary square of length $n-2$ and one of length $n-4$.
\end{itemize}
\end{itemize}
\label{squarepower}
\end{lemma}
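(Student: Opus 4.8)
The plan is to prove Lemma~\ref{squarepower} exactly as Lemma~\ref{aayush} was proved: for the relevant lengths $n$, encode the assertion as a language-inclusion problem between two nondeterministic finite automata over a ``folded'' two-track alphabet, and then discharge the inclusion mechanically with the {\tt ULTIMATE} automata library. Two features make this case lighter than Lemma~\ref{aayush}. First, at most two binary squares are ever guessed (rather than four), so the state needs to remember fewer ``first-half'' digits. Second, a power of $2$ is, in binary, a single $1$ followed by zeros, so incorporating a summand that is a power of $2$ costs almost nothing: the automaton only has to guess, at most twice over its run, a single column position (and track) into which to inject an extra $1$.

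Concretely, for each parity of $n$ I would fold $(N)_2$ into the same style of two-track representation used in the proof of Lemma~\ref{aayush} --- high half on the upper track, low half on the lower track, with subscript flags marking the handful of boundary columns near the top --- adjusting only the set of square-lengths involved: lengths $n-1$ and $n-3$ in part (a) ($n$ odd), and lengths $n$, $n-2$, and $n-4$ in part (b) ($n$ even). One then builds $A$ as a finite union of component NFAs. For part (a) these are essentially the $A$-type machines from the proof of Lemma~\ref{aayush}(a), say $A(s,t,p,m)$, where $s\le 2$ and $t\le 2$ count the binary squares of lengths $n-1$ and $n-3$ (with $(s,t)$ ranging over the tuples permitted by the two bulleted sub-cases), $p\le 2$ is the number of powers of $2$ still unspent, and $m$ is the guessed carry out of the low half into the high half; part (b) is the analogous construction over lengths $n,n-2,n-4$. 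As in Lemma~\ref{aayush}, guessing $s$ squares of one common length is the same as guessing one digit string over $\Sigma_{s+1}$ and replaying it, suitably offset, into both halves; the state carries the last few high-half digits of each square, so they can later be reused as low-half digits, together with the two running carries $c_1,c_2$. The one genuinely new transition rule is that, at any step and while $p>0$, the automaton may nondeterministically add a chosen number $e\le p$ of extra $1$s to the current upper or lower column sum, decrementing $p$ by $e$; once $p=0$ no more powers of $2$ may be placed. Acceptance is as before: the flagged top columns must reproduce the high bits of $N$ and the carries must close out consistently.

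For the companion automaton $B$ I would take the ``syntax checker'' accepting exactly the folded representations of integers of the relevant lengths ($n\ge 7$ for $n$ odd, resp.\ $n\ge 10$ for $n$ even); as in Lemma~\ref{aayush} this is a small automaton. Running the inclusion test $L(B)\subseteq L(A)$ through {\tt ULTIMATE} --- once for each parity --- then completes the proof, since Lemma~\ref{squarepower} makes no claim for the finitely many shorter lengths. As with every machine-based argument in this paper, the automaton should be cross-checked against independently computed sets of short integers before the final run is trusted.

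I expect the main obstacle to be, once again, not the automaton construction itself but the \emph{search} for a correct menu of cases: the statement of Lemma~\ref{squarepower} is \emph{stronger} than the bare assertion that every large integer is a sum of at most two binary squares and at most two powers of $2$, and only some choices of the allowed square-length pairs --- and of the power-of-$2$ budget permitted in each sub-case --- actually make the inclusion $L(B)\subseteq L(A)$ hold; one runs the (decidable) check over candidate menus until one succeeds. A secondary subtlety, particular to this lemma, is that a power of $2$ is \emph{not} length-constrained relative to $n$ the way the binary squares are, so its lone $1$-bit may land in either the upper or the lower track; the construction must allow both placements while counting each power exactly once, which is precisely the purpose of the remaining-budget parameter $p$. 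Finally, as in the proof of Lemma~\ref{aayush}, the only real engineering concern is keeping the union $A$ small enough that the model checker terminates quickly.
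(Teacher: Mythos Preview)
Your proposal is correct and follows essentially the same approach as the paper: the paper's own proof says only ``We use a similar proof strategy as before,'' points to the {\tt ULTIMATE} code files, and reports the state counts (806 and 2175 for odd and even lengths, respectively) before declaring the inclusion checks successful. Your write-up in fact supplies more implementation detail than the paper does---in particular the budget parameter $p$ for injecting the at-most-two stray $1$-bits of the powers of~$2$ into either track---and this is a perfectly natural way to realize the construction the paper leaves implicit.
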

\begin{proof}
We use a similar proof strategy as before. The {\tt ULTIMATE}
code proving our result can be found in the files $\tt OddSquarePowerConjecture.ats$ 
and
$\tt EvenSquarePowerConjecture.ats$
at \codelink; \\ there one can also find the generators can be found as 
$\tt OddSquarePower.cpp$ and $\tt EvenSquarePower.cpp$.

The final machines for the odd-length and even-length cases have 806 and 2175  states
respectively. 
The language inclusion assertions all hold. This concludes the proof.
\end {proof}
We thus have the following theorem:
\begin{theorem}
Every natural number $N$ is the sum of at most two binary 
squares and at most two powers of 2.
\end{theorem}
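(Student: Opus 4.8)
The plan is to obtain the theorem as an essentially immediate consequence of Lemma~\ref{squarepower}, supplemented by a finite check for the small cases. Note first that any representation of the type described in either clause of Lemma~\ref{squarepower} --- at most two powers of $2$ together with at most two binary squares --- is exactly a representation of the type asserted in the theorem (and since $0$ is itself a binary square, the phrase ``at most'' causes no difficulty). Consequently, if the binary representation of $N$ has odd length $n \geq 7$, then $N$ has the desired form by Lemma~\ref{squarepower}(a), and if it has even length $n \geq 10$, then $N$ has the desired form by Lemma~\ref{squarepower}(b).

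It remains to handle the integers whose binary representations have length in $\{1,2,3,4,5,6,8\}$, that is, the $191$ values with $1 \leq N \leq 63$ or $128 \leq N \leq 255$ (the number $0$ being trivial). These I would dispatch by the same dynamic-programming sumset computation used in the proof of Theorem~\ref{main}: iteratively build, as bit vectors, the set of sums of at most two powers of $2$, then the set of sums of at most two powers of $2$ and at most one binary square, then at most two binary squares, and finally verify that every integer in the two ranges is present. This is routine; one checks, for instance, that even the small values that fail to be sums of four binary squares --- such as $N = 29$ --- are covered here, e.g.\ via $29 = 2^4 + 10 + 3$.

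Combining the two cases proves the theorem for all $N$. As with Theorem~\ref{main}, the real work is not in this deduction but upstream, in Lemma~\ref{squarepower}, whose proof requires building the folded-representation NFAs for the four length regimes and discharging the corresponding language-inclusion assertions with \texttt{ULTIMATE}, just as in the proof of Lemma~\ref{aayush}. The main potential obstacle there is the same as before: keeping the constructed automata small enough that the inclusion check terminates quickly, which is why the summand lengths in Lemma~\ref{squarepower} are restricted to differ from the length of $N$ by a bounded amount.
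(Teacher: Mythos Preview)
Your proposal is correct and follows essentially the same approach as the paper's own proof: invoke Lemma~\ref{squarepower}(a) for odd lengths and Lemma~\ref{squarepower}(b) for even lengths, and dispose of the remaining small $N$ by a direct finite check. The only difference is cosmetic---the paper simply checks all $N < 512$ rather than carving out lengths $7$ and $9$ as already covered by part~(a)---so your version is a minor refinement of the same argument.
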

\begin{proof}
For $N < 512$, the result can be easily verified.  Otherwise, we use
Lemma~\ref{squarepower} (a) if $N$ is an odd-length binary number and
Lemma~\ref{squarepower} (b) if it is even.
\end{proof}
We also consider the notion of {\it generalized binary squares}. 
A number $N$ is called a generalized binary square if one can concatenate
0 or more leading zeroes to its binary representation
to produce a binary square. 
As an example, $9$ is a generalized binary square, since $9$ in base 2
is $1001$, which can be written as $001001 = (001)(001)$.
The first few generalized binary squares are
$$ 0,3,5,9,10,15,17,18,27,33,34,36,45,51,54,63, \ldots;$$
they form sequence \seqnum{A175468} in the OEIS \cite{oeis}. 

In what follows, when we refer to the length of a generalized binary square,
we mean the length including the leading zeroes. Thus, $9$ is a 
generalized binary square of length $6$ (and not $4$).
\begin{lemma}
\leavevmode
\item[(a)] Every length-$n$ integer, $n \geq 7$, $n$ odd, is the sum of 3 generalized 
binary squares, of lengths $n+1$, $n-1$, and $n-3$.

\item[(b)] Every length-$n$ integer, $n \geq 8$, $n$ even, is the sum of 3 generalized 
binary squares, of lengths $n$, $n-2$, and $n-4$.
\label{gensquarepower}
\end{lemma}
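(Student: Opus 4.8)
The plan is to prove Lemma~\ref{gensquarepower} by exactly the nondeterministic-automaton method used for Lemma~\ref{aayush}, adapted to generalized binary squares. Recall that $N$ is a generalized binary square of length $2m$ precisely when $N = a\,(2^m+1)$ for some $a$ with $0 \le a < 2^m$; the only change from an ordinary binary square of that length is that we drop the requirement $a \ge 2^{m-1}$, so the ``first half'' $a$ is now allowed to begin with zeros. I would therefore build an NFA $A$ that, reading $N$ in a folded two-bits-at-a-time representation with least significant bits first, nondeterministically guesses the half-blocks of the three generalized squares of the prescribed lengths and checks, by columnwise school addition with the carries stored in the state (and bounded by the number of summands), that the three generalized squares sum to $N$. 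Because leading zeros in the halves are now permitted, the machine need not verify any leading-$1$ condition on its guesses, so in this respect the construction is \emph{simpler} than the $A$- and $B$-type machines in the proof of Lemma~\ref{aayush}. In parallel I would build the ``syntax checker'' $B$ accepting exactly the folded representations of the integers of the relevant length ($n$ odd, $n\ge 7$ for part (a); $n$ even, $n\ge 8$ for part (b)), and then verify with the {\tt ULTIMATE} toolchain that $L(B)\subseteq L(A)$.

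The folding must be chosen so that the three summands can be added against a common frame. In part (a) the length $n$ is odd, so all three summand lengths $n+1,\,n-1,\,n-3$ are even while $N$ itself has odd length; I would take the frame to have length $n+1$, padding $(N)_2$ on the left with a single zero bit, so that the two halves of the length-$(n+1)$ summand align exactly with the two halves of the frame, while the summands of lengths $n-1$ and $n-3$ sit shifted upward by one and by two bit positions. Exactly as in the proof of Lemma~\ref{aayush}, these shifts are accommodated by carrying a few previously guessed bits in extra state parameters and by using special tape symbols to mark the half-boundary positions, where a guess for the high half of a short summand must be reused as the guess for its low half. Part (b) is handled the same way with an even-length frame of length $n$: the length-$n$ summand aligns with the frame, and the length-$(n-2)$ and length-$(n-4)$ summands are shifted by one and two positions, mirroring the even case of Lemma~\ref{aayush} but with only three summands of fixed lengths (no ``at most'', no union over length combinations), so the resulting machine is smaller than $A_{\rm even}$. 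For the finitely many $n$ below the stated thresholds I would confirm the claim by the same dynamic-programming sumset computation used in the proof of Theorem~\ref{main}.

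The main obstacle is combinatorial bookkeeping rather than anything conceptual: getting the off-by-one and off-by-two alignments of the short summands against the frame exactly right, choosing correct bounds for the carry parameters, and --- the one genuinely new wrinkle --- handling the length-$(n+1)$ summand in part (a), whose value can in principle exceed $N$, so that its top bits must interact with the carries in just the right way for the total to have length exactly $n$. One must also make sure $B$ captures \emph{precisely} the valid folded encodings of length-$n$ integers (genuine leading bit $1$, padding bit $0$, correct number of tape cells), so that the inclusion test is meaningful. Once $A$ and $B$ are built correctly the verification itself is a routine, machine-checked language-inclusion decision, and, as with the earlier lemmas, I would guard against construction errors by re-running the automaton on a handful of small lengths where the set of representable integers can be computed independently by brute force.
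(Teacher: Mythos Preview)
Your proposal is correct and follows essentially the same approach as the paper: adapt the NFA construction of Lemma~\ref{aayush} by dropping the leading-$1$ requirement on the guessed half-blocks (the paper likewise notes that in part~(a) the length-$(n+1)$ summand must begin with~$0$), build the syntax checker~$B$, and verify $L(B)\subseteq L(A)$ with {\tt ULTIMATE}. The paper reports machines of $132$ and $263$ states for the odd and even cases, confirming your expectation that they are smaller than those for Lemma~\ref{aayush}.
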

\begin{proof}
We use a very similar proof strategy as in the proof of Lemma~\ref{aayush}. We drop
the requirement that the most significant digit of our guessed squares be 1, thus
allowing for generalized binary squares. Note that the square of length $n+1$ 
in part (a) must start with a 0.

The {\tt ULTIMATE}
code proving our result can be found in the files $\tt OddGenSquareConjecture.ats$ 
and
$\tt EvenGenSquareConjecture.ats$
at \codelink;  there one can also find
the generators 
$\tt OddGeneralizedSquares.cpp$ and $\tt EvenGeneralizedSquares.cpp$.
The final machines for the odd-length and even-length cases have 132 and 263 states
respectively.
\end{proof}
We thus have the following theorem:
\begin{theorem}
Every natural number $N > 7$ is the sum of 3 generalized binary squares.
\end{theorem}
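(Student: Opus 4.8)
The plan is to deduce the theorem from Lemma~\ref{gensquarepower} together with a small finite computation. First I would note that the two parts of Lemma~\ref{gensquarepower} jointly cover every integer whose canonical binary representation has length at least $7$: if $(N)_2$ has odd length $n$, then $n \geq 7$ and part (a) writes $N$ as a sum of three generalized binary squares (of lengths $n+1$, $n-1$, $n-3$); if $(N)_2$ has even length $n$, then $n \geq 8$ and part (b) does the same (of lengths $n$, $n-2$, $n-4$). Since every integer $n \geq 7$ is either odd and $\geq 7$ or even and $\geq 8$, this case split is exhaustive, and it settles all $N \geq 2^6 = 64$.

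It then remains to handle the finitely many values $8 \leq N \leq 63$, whose binary representations have length $4$, $5$, or $6$. For these I would use the same brute-force dynamic programming employed in the proof of Theorem~\ref{main}: first enumerate the generalized binary squares not exceeding $63$ --- equivalently, those $N$ for which some string $0^j (N)_2$ has the form $xx$ --- obtaining a short explicit list (an initial segment of \seqnum{A175468}) that includes $0$; then form the threefold sumset of this set by two successive shifted-\textsc{xor} passes over the corresponding bit vector; and finally verify that the resulting set contains every integer in $\{8, 9, \ldots, 63\}$. Because $0$ is a generalized binary square, ``sum of exactly three'' here automatically subsumes sums of one or two nonzero summands, so nothing is lost.

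Since the theorem is an immediate corollary of the lemma once these small cases are checked, there is essentially no genuine obstacle at this level, and the only points that need attention are bookkeeping ones. One must confirm that the length thresholds in Lemma~\ref{gensquarepower} leave no gap (they do not, as observed above), so that the finite verification really need only reach $63$; and one should bear in mind the subtlety already flagged in the proof of the lemma, that in part (a) the length-$(n+1)$ summand is forced to begin with a $0$ --- but this affects only the internal automaton construction, not the reduction performed here. All of the real difficulty is absorbed into the automaton-based verification of Lemma~\ref{gensquarepower} itself.
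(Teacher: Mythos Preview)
Your proposal is correct and follows essentially the same approach as the paper: reduce to Lemma~\ref{gensquarepower} for all $N$ whose binary representation has length at least $7$ (i.e., $N \geq 64$), and handle the finitely many values $8 \leq N \leq 63$ by direct verification. The paper's proof is terser but structurally identical.
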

\begin{proof}
For $7 < N < 64$ the result can be easily verified.  
Otherwise, we use
Lemma~\ref{gensquarepower} (a) is an odd-length binary number and
Lemma~\ref{gensquarepower} (b) if it is even.
\end{proof}

\section{Further work} 

Numerical evidence suggests the following two conjectures:

\begin{conjecture}
Let $\alpha_3$ denote the lower asymptotic density of the set $S_3$ of natural numbers that are the sum of three binary squares.  Then $\alpha_3 < 0.9$.  
\end{conjecture}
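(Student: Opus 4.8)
The plan is to exhibit arbitrarily large $X$ with $|S_3 \cap [1,X]| < 0.9\,X$; this immediately gives $\alpha_3 = \liminf_X |S_3\cap[1,X]|/X < 0.9$. I would take $X = 2^{2m}$ and write $S_3 \cap [1,2^{2m}) = (S_3 \cap [1,2^{2m-1})) \cup (S_3 \cap [2^{2m-1},2^{2m}))$. The first piece has fewer than $2^{2m-1}$ elements, so it suffices to show that, for infinitely many $m$, fewer than $0.8\cdot 2^{2m-1}$ of the numbers in $[2^{2m-1},2^{2m})$ are sums of three binary squares. The foothold is this: if $N \in [2^{2m-1},2^{2m})$ and $N = s_1+s_2+s_3$ with $s_1\ge s_2\ge s_3\ge 0$ binary squares, then $s_1 \ge N/3 > 2^{2m-3}$, so $s_1$ has length at least $2m-2$; since binary squares have even length and $s_1 < 2^{2m}$, this forces $s_1 \in C_{m-1}\cup C_m$. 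Thus the dominant summand is confined to one of just two length classes.

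Next I would stratify by the length $2n_2$ of the second summand. Fix a constant $J$. For those representations in which, after fixing $s_1 \in C_\ell$ with $\ell\in\{m-1,m\}$, the summands $s_2$ and $s_3$ also lie in $C_{\ell-J}\cup\cdots\cup C_\ell$ (all lengths within $2J$ of the top), the folded ``school addition'' construction in the proof of Lemma~\ref{aayush} applies essentially unchanged: one builds a finite automaton $A_J$ over the folded alphabet accepting exactly the folded representations that arise in this way, and then --- instead of checking language inclusion --- determinizes $A_J$ and reads off from its transfer matrix the exact number of accepted strings of each length. This produces, for each fixed $J$, a computable density $\rho_J$ for this sub-family of $S_3 \cap [2^{2m-1},2^{2m})$, with $\rho_J$ nondecreasing in $J$. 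One would compute $\rho_J$ for $J$ as large as is feasible, hoping to watch it stabilize comfortably below $0.8$ (the census data --- e.g.\ that only $95422$ of the numbers below $2^{17}$ are sums of three binary squares --- suggests the true density is near $0.73$, so there is room).

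The hard part, and the place I expect to get stuck, is the complementary set: those $N\in[2^{2m-1},2^{2m})$ for which \emph{every} three-binary-square representation uses a summand of length less than $2\ell-2J$. A union bound over pairs (dominant summand, remainder) is hopeless here, because the number of such pairs is of order $2^{3m-2J}$, vastly larger than the interval length $2^{2m-1}$, so the intervals ``$N - s_1 \in (\text{sums of }\le 2\text{ binary squares})$'' overlap and cover everything; equivalently, the events ``$N-b\in S_2$'' as $b$ ranges over binary squares are strongly positively correlated, and it is precisely this correlation --- that $S_2$ and its translates by binary squares share large common ``holes'' --- that must make the conjecture true but that automata over windows of bounded width cannot see. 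I would therefore expect genuine progress to require one of: (i) proving that $\rho_J$ stabilizes, i.e.\ that any $N$ in the upper interval that is a sum of three binary squares \emph{at all} is already a sum of three binary squares of lengths within a bounded amount of the top, which would make the whole question decidable by exactly the machinery of this paper; (ii) a direct construction of a positive-lower-density family of non-representable $N$ from an explicit bit-pattern ``hole'' that survives subtraction of an arbitrary binary square; or (iii), as a warm-up, first establishing the weaker statement $\alpha_3 < 1$. In every route, controlling the contribution of short- and medium-length summands is the crux.
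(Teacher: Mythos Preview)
The statement you are attempting to prove is labeled a \emph{conjecture} in the paper, not a theorem; the paper offers no proof and merely says ``numerical evidence suggests'' it. So there is no paper proof to compare your attempt against.

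As for the attempt itself: you are candid that it is a strategy rather than a proof, and you have correctly identified the obstruction. Your stratification by the length of the dominant summand is sound, and the idea of using the folded-automaton machinery to compute exact densities $\rho_J$ for the ``all summands near the top'' sub-family is a natural extension of the paper's method. But, as you say yourself, the complementary case --- representations that necessarily use a short or medium-length summand --- is not handled, and the union-bound blowup you describe is real. None of your three suggested escape routes (i)--(iii) is carried out, and each is a substantial open problem in its own right; in particular, route (i) would amount to showing that membership in $S_3$ is decidable by the paper's bounded-window automata, which is not known. So what you have is a reasonable research plan, not a proof, and the paper's authors evidently did not know how to complete it either.
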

We could also focus on sums of {\it positive} binary squares.  (For the analogous problem dealing with ordinary squares, see, e.g., 
\cite[Chapter 6]{Grosswald:1985}.)  It seems likely that our method could be used to prove the following result.
\begin{conjecture}
Every natural number $>1772$ is the sum of exactly four positive
binary squares.  There are $112$ exceptions, given below:
\begin{multline*} 
0,1,2,3,4,5,6,7,8,9,10,11,13,14,15,16,17,18,20,21,22,23,25,27,28,29,30,32,34,35, \\
37,39,41,42,44,46,47,49,51,53,56,58,62,65,67,74,83,88,95,100,104,107,109,113,116,\\
122,125,131,134,140,143,148,149,155,158,160,161,167,170,173,175,182,184,368,385,\\
402,407,419,424,436,441,458,475,492,509,526,543,552,560,569,587,599,608,613,\\
620,625,638,647,653,671,686,698,713,1508,1541,1574,1607,1640,1673,1706,1739,1772 . 
\end{multline*}
\end{conjecture}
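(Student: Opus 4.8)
The plan is to prove this conjecture by exactly the method used for Theorem~\ref{main}: establish an analogue of Lemma~\ref{aayush} tailored to \emph{positive} binary squares, verify it with the automaton-based language-inclusion procedure, and dispose of the small cases by dynamic programming. First I would set up the search space. As in the proof of Lemma~\ref{aayush}, for a tuple $\langle k_1,k_2,k_3,k_4 \rangle$ of length offsets and a length parameter $L$, the set of folded representations of those $N$ that arise as $x_1x_1 + x_2x_2 + x_3x_3 + x_4x_4$ with $|x_i| = L - k_i$ is a regular language; but now I additionally require that each $x_i$ be \emph{nonempty and begin with $1$}, so that every summand is a genuine positive binary square and the total count is exactly four. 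Concretely, where the construction of Lemma~\ref{aayush} bundles several squares of a common length into a single digit over $\Sigma_{t+1}$, demanding positivity forces the \emph{most significant} bundled digit of such a group to equal the group size $t$ (each leading bit is $1$); this is a minor strengthening of the acceptance conditions of the machines $A(\cdot)$ — stricter than, but in the same spirit as, the relaxation used for Lemma~\ref{gensquarepower}. The NFA $A$ is then the union over a finite family of such tuples, $B$ is the syntax checker accepting all sufficiently long folded representations of each parity, and an ULTIMATE verification of $L(B) \subseteq L(A)$ yields the lemma for all $N$ beyond a threshold in each parity class.

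Given such a lemma, the remaining work is routine. One verifies all $N$ up to a suitable power of $2$ (comparable to $2^{11}$, since the largest claimed exception $1772 < 2^{11}$ and the lemma threshold should be of the same order) using the same XOR-and-shift dynamic programming as for Theorem~\ref{main}, but restricted to \emph{positive} binary squares and to sums of \emph{exactly} four of them; this simultaneously confirms the bound $1772$, the count $112$, and the explicit list of exceptions, provided the lemma threshold and the dynamic-programming range overlap correctly.

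The main obstacle is the \emph{search} for a working family of offset tuples. Insisting that all four summands be positive removes the slack that ``at most four'' provided in Lemma~\ref{aayush} (there one could take some $t_i = 0$), so a short list of tuples may no longer cover every residual $N$; one should expect to need a larger family, hence larger automata, and it is a priori possible that no family with uniformly bounded offsets suffices — in which case one must enlarge the folded-alphabet bookkeeping (remembering more guessed digits) or stitch together several length regimes, as is already done for the $(n-3)$- and $(n-5)$-length summands in the odd case. Exhibiting a finite, bounded-width family that works, while keeping the machines small enough for the inclusion check to terminate quickly, is the crux; once that is in hand, everything else is a direct adaptation of the machinery of Section on Lemma~\ref{aayush}, with the code and generators to be posted alongside the existing files at \codelink.
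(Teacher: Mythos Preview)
The paper does not prove this statement: it is explicitly labeled a \emph{conjecture}, and the only remark accompanying it is ``It seems likely that our method could be used to prove the following result.'' There is therefore no proof in the paper to compare your proposal against.

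That said, your proposal is exactly the approach the paper hints at, and your analysis of what changes is accurate. The automata of Lemma~\ref{aayush} already enforce leading $1$'s on each guessed half (so the summands there are already positive binary squares); the real difference, as you identify, is that Lemma~\ref{aayush} lists cases with only two or three summands, and those would have to be replaced by cases with exactly four. The search for a covering family of length-offset tuples in which every case uses four summands is indeed the crux, and the paper does not claim to have carried it out. One small correction: your suggested dynamic-programming range of about $2^{11}$ is too optimistic. In Lemma~\ref{aayush} the automaton arguments only kick in at $n \geq 13$ (odd) and $n \geq 18$ (even), forcing a brute-force check up to $2^{17}$; a positive-summands analogue is unlikely to have a \emph{smaller} threshold, so the finite check must extend at least that far, well beyond the last listed exception $1772$.

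In short: your plan matches the paper's own suggestion, but it remains a plan. Until the tuple search succeeds and the inclusion check is actually run, the statement stays a conjecture.
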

Other interesting things to investigate include estimating
the number
of distinct representations of $N$ as a sum of four binary
squares, both in the case where order matters, and where order does not matter.  These are sequences
\seqnum{A290335} and \seqnum{A298731} in the OEIS,
respectively.

In recent work \cite{Kane&Sanna&Shallit:2018} it was proved, using a combinatorial and number-theoretic approach, that the binary $k$'th powers form an asymptotic basis of finite order for the multiples of $\gcd(k, 2^k-1)$.  However, the constant obtained thereby is rather large.

%% .. or use the thebibliography environment explicitly

\end{document}